\newtheorem{theorem}{Theorem}[section]
\newtheorem{lemma}[theorem]{Lemma}
\newtheorem{prop}[theorem]{Proposition}
\newtheorem{corollary}[theorem]{Corollary}
\theoremstyle{definition}
\newtheorem{definition}[theorem]{Definition}
\newtheorem{example}[theorem]{Example}
\theoremstyle{remark}
\newtheorem{remark}[theorem]{Remark}
\numberwithin{equation}{section}
\newcommand{\DD}{{\mathbb D}}
\newcommand{\RR}{{\mathbb R}}
\newcommand{\CC}{{\mathbb C}}
\newcommand{\eps}{\varepsilon}
 \DeclareMathOperator{\psh}{PSH}
 \DeclareMathOperator{\sh}{SH}
\renewcommand{\phi}{\varphi}
\title{Connectedness in the Pluri-fine Topology}
\author{Said El Marzguioui and Jan Wiegerinck\\}
\begin{document}

\maketitle \footnote{2000 Mathematics Subject Classification
32U15, 31C40, 32U05, 30C85}

\begin{abstract} We study connectedness in the pluri-fine topology on $\CC^n$ and
obtain the following results. If $\Omega$ is a pluri-finely  open
and pluri-finely connected set in $\CC^n$ and $E\subset\CC^n$ is
pluripolar, then $\Omega\setminus E$ is pluri-finely connected.
The proof hinges on precise information about the structure of
open sets in the pluri-fine topology: Let $\Omega$ be a
pluri-finely open subset of $\CC^{n}$. If $z$ is any point in
$\Omega$, and $L$ is a complex line passing through $z$, then
obviously $\Omega \cap L$ is a finely open neighborhood of $z$ in
$L$. Now let $C_L$ denote the finely connected component of $z$ in
$\Omega\cap L$. Then $\cup_{L\ni z} C_L$ is a pluri-finely
connected neighborhood of $z$. As a consequence we find that if
$v$ is a finely plurisubharmonic function defined on a
pluri-finely connected pluri-finely open set, then $v= -\infty$ on
a pluri-finely open subset implies $v\equiv -\infty$.

\end{abstract}
\begin{scriptsize}
\scriptsize {\noindent {\sc \textbf{Key words:}} Fine topology,
Subharmonic functions, Harmonic measure, Pluri-fine topology,\\
Plurisubharmonic functions, Pluripolar sets, Pluripolar hulls}.
\end{scriptsize}
\begin{center}
\section{Introduction}
\end{center}

The pluri-fine topology on an open set $\Omega$ in $\CC^{n}$ is
the coarsest topology on $\Omega$ that makes all plurisubharmonic
(PSH) functions on $\Omega$ continuous. Understanding the
pluri-fine topology is a first step towards understanding
pluri-fine potential theory and pluri-fine holomorphy. There is
now some evidence, see \cite{EEW,EJ}, that a good theory of finely
plurisubharmonic and finely holomorphic functions may be needed
for dealing with questions about pluripolarity.

In order to avoid cumbersome expressions like ``locally
pluri-finely connected sets'', we adopt the following {\bf
convention}: Topological notions referring to the pluri-fine
topology will be qualified by the prefix $\mathcal{F}$ to
distinguish them from those pertaining to Euclidean topology. For
example, $\mathcal{F}$-open, $\mathcal{F}$-domain (it means
$\mathcal{F}$-open and $\mathcal{F}$-connected), $\mathcal{F}$-
component,.... In view of the fact that the pluri-fine topology
restricted to a complex line coincides with the fine topology on
that line, this convention can be used in the one dimensional
setting, where we will work with the fine topology, at the same
time.

In a previous paper \cite{SJ} we proved that the pluri-fine
topology is locally connected, and we stated the following
theorem.

\begin{theorem}\label{th1} Let $U$ be an $\mathcal{F}$-domain
in $\subseteq \CC^{n}$. If $E$ is a pluripolar set, then $U
\backslash E$ is $\mathcal{F}$-connected.
\end{theorem}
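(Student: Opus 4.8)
The plan is to prove the statement by reducing it to the one-dimensional situation along complex lines, the quoted structure result serving as the essential geometric tool.

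First I would pass to a convenient pluripolar set. Choosing a plurisubharmonic $u$ with $E\subseteq\{u=-\infty\}=:\tilde E$, the set $\tilde E$ is complete pluripolar and, being a sublevel set of an $\mathcal{F}$-continuous function, is $\mathcal{F}$-closed. Since a nonempty $\mathcal{F}$-open set is never pluripolar, $\tilde E$ has empty $\mathcal{F}$-interior, so $U\setminus\tilde E$ is $\mathcal{F}$-dense in $U\setminus E$; as any set $T$ with $S\subseteq T\subseteq\overline{S}$ is $\mathcal{F}$-connected whenever $S$ is, it suffices to prove that $U\setminus\tilde E$ is $\mathcal{F}$-connected. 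I will also use the one-dimensional prototype, due to Fuglede: a finely connected finely open subset of $\CC$ stays finely connected after deletion of a polar set. Because the pluri-fine topology restricted to a complex line $L$ is the fine topology of $L$, and because on every line with $u|_L\not\equiv-\infty$ the slice $\tilde E\cap L=\{u|_L=-\infty\}$ is polar, Fuglede's theorem applies on every such \emph{good} line. The remaining \emph{bad} directions through a fixed point, namely the $L$ with $L\subseteq\tilde E$, form a pluripolar cone, hence contribute nothing to $U\setminus\tilde E$.

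Next I would organise the $\mathcal{F}$-components. Let $A$ be an $\mathcal{F}$-component of $U\setminus\tilde E$. At a point $z\in A$ the structure result gives an $\mathcal{F}$-neighbourhood $N_z=\bigcup_{L\ni z}C_L$; here $N_z\setminus\tilde E=\bigcup_{L\text{ good}}(C_L\setminus\tilde E)$ is $\mathcal{F}$-connected, since each good slice is $\mathcal{F}$-connected and contains the retained point $z$. It therefore lies in $A$ and contains an $\mathcal{F}$-open set around $z$, so $A$ is $\mathcal{F}$-open. I then claim that $B:=\overline{A}\cap U$ is $\mathcal{F}$-clopen in $U$; it is $\mathcal{F}$-closed in $U$ by construction, and granting $\mathcal{F}$-openness, connectedness of $U$ forces $B=U$. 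Thus every $\mathcal{F}$-component of $U\setminus\tilde E$ is $\mathcal{F}$-dense in $U$; as two disjoint nonempty $\mathcal{F}$-open sets cannot both be $\mathcal{F}$-dense, there is exactly one component, which is the assertion.

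The crux is the $\mathcal{F}$-openness of $B$ at a boundary point $e\in\partial A\subseteq\tilde E$. By the structure result, $N_e$ contains an $\mathcal{F}$-open neighbourhood $O$ of $e$. Any $p\in O\setminus\tilde E$ lies on a unique line $L$ through $e$, which is good, so $p\in C_L$ and $C_L\setminus\tilde E$ is $\mathcal{F}$-connected, hence contained in a single $\mathcal{F}$-component. If \emph{all} good rays through $e$ lie in the same $\mathcal{F}$-component, then—since $e\in\overline{A}$ places at least one good ray inside $A$—I obtain $O\setminus\tilde E\subseteq A$; combined with the $\mathcal{F}$-density of $O\setminus\tilde E$ in $O$ (empty $\mathcal{F}$-interior of $\tilde E$), this yields $O\subseteq\overline{A}$, i.e. $e$ is $\mathcal{F}$-interior to $B$.

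The one genuinely two-dimensional step, which I expect to be the main obstacle, is therefore: any two good lines through $e$ have their slices $C_L\setminus\tilde E$ in the same $\mathcal{F}$-component. The common point $e$ has been removed, so the structure result by itself does not join the rays. I would argue inside the complex $2$-plane $\Pi$ spanned by the two lines, where $E\cap\Pi$ is pluripolar: choose an apex $e'\in C_{L_1}\setminus\tilde E$ near $e$, so that by the structure result in $\Pi$ the set $N_{e'}\setminus\tilde E$ is $\mathcal{F}$-connected (its good rays all share the \emph{retained} point $e'$), and then slide good lines through $e'$ until one meets $C_{L_2}\setminus\tilde E$, each such transversal being good and $\mathcal{F}$-connected to $e'$ by Fuglede's theorem. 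Arranging this rotation so that the transversals stay inside $U$ while remaining good is the delicate point; once it is secured, the two rays lie in one $\mathcal{F}$-component and the whole argument closes.
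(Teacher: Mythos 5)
Your overall skeleton is sound and, in fact, correctly isolates where the difficulty lies: the reduction to a complete pluripolar (hence $\mathcal{F}$-closed) set $\tilde E$, the $\mathcal{F}$-openness of the components of $U\setminus\tilde E$, and the density/clopen argument are all fine, and the whole matter does come down to showing that the punctured slices $C_L\setminus\tilde E$ over the good lines $L$ through a fine boundary point $e\in\tilde E$ all lie in one $\mathcal{F}$-component. But that crux is left genuinely open, and the gap is not where you locate it. When you rotate a transversal $L'$ through $e'$ until it meets $C_{L_2}\setminus\tilde E$ at a point $p$, the facts that $L'$ is good and that $e',p\in U\cap L'$ do \emph{not} make $p$ ``$\mathcal{F}$-connected to $e'$ by Fuglede's theorem'': Fuglede's deletion theorem operates inside a single fine component of $U\cap L'$, so you need $p$ to lie in the fine component of $e'$ in $U\cap L'$, and $U\cap L'$ may have many fine components. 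Nothing in the qualitative structure result (Theorem \ref{th2}) prevents $p$ from landing in the wrong one; nor can you argue that the fine neighborhood $N_{e'}$ must meet $C_{L_2}\setminus\tilde E$ at all, since a pluri-fine neighborhood contains no ball and can perfectly well miss the slice $C_{L_2}$ entirely. So the delicate point is not ``keeping the transversals inside $U$ and good,'' and the rotation argument as described does not close.

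What closes it is uniformity, i.e.\ precisely the paper's Proposition \ref{pr2} combined with Lemma \ref{le2} (as in the proof of Theorem \ref{th2}): there exist $\kappa>0$ and an $\mathcal{F}$-neighborhood $V$ of $e$ such that for \emph{every} $v\in V$ and \emph{every} line $L\ni v$ one has $U\cap L\cap B(v,\kappa)\subseteq C_L(v)$, where $C_L(v)$ is the fine component of $v$ in $U\cap L$. Granting this, choose $e'\in C_{L_1}\cap V\cap B(e,\kappa/2)\setminus\tilde E$ and $p\in C_{L_2}\cap B(e,\kappa/2)\setminus\tilde E$ (possible, since these slices contain fine neighborhoods of $e$ in $L_1,L_2$, which are non-polar, while $\tilde E$ cuts each good line in a polar set); then $|p-e'|<\kappa$ forces $p\in C_{L'}(e')$ for the line $L'$ through $e'$ and $p$, and Fuglede's theorem applied to $C_{L'}(e')$ finishes the crux. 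So your route is viable, but only with the quantitative proposition, which your proposal never invokes. Note that the paper itself argues quite differently: it first proves an identity principle for $\mathcal{F}$-plurisubharmonic functions (Theorem \ref{th6a}), using Proposition \ref{pr2} and Fuglede's one-variable identity theorem on lines through points of $U$ near a fine boundary point, and then deduces Theorem \ref{th1} by gluing: if $U\setminus E=V\cup W$ with $V,W$ nonempty disjoint $\mathcal{F}$-open sets, take $f$ plurisubharmonic with $E\subset\{f=-\infty\}$ (Josefson) and set $\tilde f=f$ on $V$, $\tilde f=-\infty$ on $W\cup E$; then $\tilde f$ is $\mathcal{F}$-plurisubharmonic and equals $-\infty$ on the $\mathcal{F}$-open set $W$, hence $\equiv-\infty$, a contradiction. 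That function-theoretic detour is exactly what sidesteps the punctured-slice connectivity problem on which your proposal founders.
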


We referred to the corresponding result in fine potential theory
for a proof, but this is unjustified, as Norman Levenberg
\cite{Le} noticed. Here we will give a proof of Theorem \ref{th1}.
It will be a consequence of technical result, Proposition
\ref{pr2}. A slightly weaker but easy formulation is as follows.
For a point $z$ in an $\mathcal{F}$-open subset $\Omega\subset
\CC^n$ and $L$ a complex line passing through $z$, denote by $C_L$
the $\mathcal{F}$-component of $z$ in the $\mathcal{F}$-open set
$\Omega\cap L$.

\begin{theorem}\label{th2}
Let $\Omega$ be an $\mathcal{F}$-open subset of $\CC^{n}$ and let
$z\in\Omega$. Then $\cup_{L\ni z} C_L$ is an
$\mathcal{F}$-neighborhood of $z$ which is
$\mathcal{F}$-connected.
\end{theorem}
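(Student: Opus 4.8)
I would split the statement into the $\mathcal{F}$-connectedness of $\bigcup_{L\ni z}C_L$ and the fact that it is an $\mathcal{F}$-neighbourhood of $z$, the latter being the substantial part. For connectedness, I would first record that, because the pluri-fine topology restricted to a complex line $L$ coincides with the fine topology on $L$, each finely connected set $C_L$ is $\mathcal{F}$-connected as a subset of $\CC^n$ (connectedness is intrinsic to the subspace topology of $L$, which is exactly the fine topology there, and connectedness in a subspace passes to the ambient space). Since every $C_L$ contains the common point $z$, the standard fact that a union of connected sets sharing a point is connected shows at once that $\bigcup_{L\ni z}C_L$ is $\mathcal{F}$-connected; no potential theory is needed here.

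For the neighbourhood property I would set $z=0$ and use that, $\Omega$ being $\mathcal{F}$-open with $0\in\Omega$, the set $A=\CC^n\setminus\Omega$ is pluri-thin at $0$. Hence there are a ball $B=B(0,R)$ and a PSH function $u$ on $B$ with $u(0)=0$ and $u\le-1$ on $A\cap B$, so that $V=\{w\in B:u(w)>-1\}$ is an $\mathcal{F}$-open neighbourhood of $0$ contained in $\Omega$. Restricting to a line $L_v=\{\zeta v:\zeta\in\CC\}$, the function $\phi_v(\zeta)=u(\zeta v)$ is subharmonic, $\phi_v(0)=0$, and $\{\phi_v\le-1\}$ is thin at $0$; in particular, by the classical fact that a set thin at a point of the plane omits a sequence of circles shrinking to that point, there are radii $\rho\downarrow0$ with $\phi_v>-1$ on the whole circle $\{|\zeta|=\rho\}$.

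The engine of the argument would be a one–dimensional connectivity lemma: if $\psi$ is subharmonic on a disc $D(0,\rho_0)$ with $\psi(0)>-1$ and $\psi>-1$ on the bounding circle $\{|\zeta|=\rho_0\}$, then $\{\psi>-1\}\cap D(0,\rho_0)$ is finely connected, hence lies in a single fine component. I would deduce it from the maximum principle together with classical one–dimensional fine potential theory: a loop contained in the bad set $\{\psi\le-1\}$ and enclosing $0$, or enclosing any other good point, would by the maximum principle force $\psi\le-1$ at an interior good point, so the bad set cannot separate the good set in the Euclidean sense; since the obstruction set is thin, the classical fact that a thin set cannot disconnect a finely open planar set then upgrades this to fine connectedness. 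Applied on each line $L_v$ this places the good disc $\{\phi_v>-1\}\cap D(0,\rho_0(v))$ inside the fine component $C_{L_v}$ of $0$, so that $\bigcup_v\big(\{\phi_v>-1\}\cap D(0,\rho_0(v))\big)\subseteq\bigcup_{L\ni0}C_L$.

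It remains to assemble these per–line fine neighbourhoods into a single pluri-fine neighbourhood of $0$, and this is the step I expect to be the main obstacle. The difficulty is genuine: the natural global candidate $V=\{u>-1\}$ is too large, since a point of $V$ on a line $L$ need not lie in the fine component of $0$ in $\Omega\cap L$, so one cannot simply assert $V\subseteq\bigcup_L C_L$. My plan is therefore to show instead that $\CC^n\setminus\bigcup_{L}C_L$ is pluri-thin at $0$: the part lying in $A$ is controlled by $u$, while for a point $w\in\Omega$ that is \emph{not} in $C_{L_{0,w}}$ the one–dimensional maximum-principle analysis shows that $w$ must be screened from $0$ by the bad set $\{u\le-1\}$ along its line, i.e.\ $w$ lies in the pluri-fine ``shadow'' of $A$. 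Turning this screening into a quantitative, direction-uniform estimate --- presumably by a capacity or compactness argument over the directions $v\in\mathbb{P}^{n-1}$, or by exhibiting an explicit PSH minorant that is $<0$ on the whole unreachable set --- is the delicate point on which the proof of the neighbourhood property, and hence of Proposition \ref{pr2}, really hinges.
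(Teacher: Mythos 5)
Your first half is fine and matches the paper: the $\mathcal{F}$-connectedness of $\bigcup_{L\ni z}C_L$ is immediate because the $C_L$ are $\mathcal{F}$-connected sets all containing $z$. The genuine gap is in the second half, precisely at the point you yourself flag as ``the main obstacle'': your thinness argument produces, for each direction $v$, a radius $\rho_0(v)>0$ and a circle $\{|\zeta|=\rho_0(v)\}$ on which $u>-1$, but thinness of $\{\phi_v\le -1\}$ at $0$ gives no lower bound on $\rho_0(v)$ that is uniform in $v$; a priori $\inf_v\rho_0(v)=0$, in which case $\bigcup_v\bigl(\{\phi_v>-1\}\cap D(0,\rho_0(v))\bigr)$ need not contain any $\mathcal{F}$-neighborhood of $0$ at all. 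The repairs you suggest (compactness over $\mathbb{P}^{n-1}$, a capacity estimate, or proving that the complement of $\bigcup_L C_L$ is pluri-thin at $0$) are not carried out, and the compactness route is particularly problematic because the exceptional sets $\{\phi_v\le-1\}$ vary with $v$ in an uncontrolled way, so there is no evident semicontinuity of $\rho_0(v)$ in the direction. As written, the proposal therefore does not establish the neighborhood property, which is the substantial claim of the theorem.

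The ingredient you are missing is the paper's Lemma \ref{le0}, a quantitative substitute for thinness proved with the Beurling--Nevanlinna circular projection theorem (Theorem \ref{th4}): for $d<c<0$ there is $\kappa>0$, depending only on $c$ and $d$, such that \emph{every} $\varphi\in\sh_{-}(\DD)$ with $\varphi(0)>c$ has the property that $\{\varphi\ge d\}$ contains a circle of radius greater than $\kappa$ about each point of $\{\varphi>c\}\cap\DD(0,1/8)$. The uniformity in $\varphi$ is what saves the day: starting from a basic neighborhood $V=B(z,1)\cap\{\varphi>0\}$ given by Theorem \ref{th6}, all restrictions $\varphi_L$ to complex lines $L\ni z$ obey the same normalization $\varphi_L(z)=\varphi(z)$, hence all admit circles of radius at least the \emph{same} $\kappa$; then Lemma \ref{le2} (essentially your ``engine'' lemma, which the paper proves via polygonal connectivity) shows that $V\cap B(z,\kappa)\cap L$ lies in a single fine component of $\Omega\cap L$, so the fixed $\mathcal{F}$-open set $V\cap B(z,\kappa)$ is contained in $\bigcup_{L\ni z}C_L$. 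This harmonic-measure estimate, uniform over all subharmonic functions with a common value bound at the center, is the idea your thinness-only approach cannot reproduce. A minor secondary point: your justification of the engine lemma invokes ``a thin set cannot disconnect a finely open planar set,'' which is not a standard fact in that form; what is true, and suffices for the upgrade you want, is the Gamelin--Lyons theorem that a finely open subset of $\CC$ is finely connected if and only if it is connected in the Euclidean topology.
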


Note that $C_L$ is $\mathcal{F}$-open in $L$, because the fine
topology is locally connected, cf.~\cite{Fu71}. We will also
present here (cf. Corollary \ref{co2}) a different proof of this
fact. Theorem \ref{th2} includes the main result in \cite{SJ}:

\begin{corollary}\label{co1} The pluri-fine topology on an open set $\Omega$
in $\CC^{n}$ is locally connected.
\end{corollary}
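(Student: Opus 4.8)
The plan is to deduce the corollary from Theorem \ref{th2} by means of the standard topological characterization of local connectedness: a topological space is locally connected if and only if every connected component of every open set is open. I will verify this criterion for the pluri-fine topology on $\Omega$.

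First I would fix an arbitrary $\mathcal{F}$-open subset $U$ of $\Omega$, choose an $\mathcal{F}$-component $W$ of $U$, and pick a point $z\in W$. Since $\Omega$ is Euclidean open, $U$ is $\mathcal{F}$-open in $\CC^n$, so Theorem \ref{th2} applies with $U$ playing the role of the ambient $\mathcal{F}$-open set. It furnishes the set $V:=\cup_{L\ni z}C_L$, where $C_L$ is the $\mathcal{F}$-component of $z$ in $U\cap L$. By the theorem, $V$ is an $\mathcal{F}$-connected $\mathcal{F}$-neighborhood of $z$, and since each $C_L\subseteq U\cap L\subseteq U$, we have $V\subseteq U$.

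Next, because $V$ is $\mathcal{F}$-connected, contains $z$, and is contained in $U$, it must lie entirely inside the $\mathcal{F}$-component $W$ of $z$ in $U$; that is, $V\subseteq W$. As $V$ is an $\mathcal{F}$-neighborhood of $z$, this shows that $W$ is an $\mathcal{F}$-neighborhood of each of its points, hence $\mathcal{F}$-open. Since $U$ and $W$ were arbitrary, every $\mathcal{F}$-component of every $\mathcal{F}$-open subset of $\Omega$ is $\mathcal{F}$-open, which is precisely local connectedness.

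I do not expect a substantial obstacle here once Theorem \ref{th2} is available: all the genuine work, namely producing an $\mathcal{F}$-connected $\mathcal{F}$-neighborhood of $z$ inside the prescribed $\mathcal{F}$-open set, has already been accomplished in the theorem. The only point requiring care is the purely topological equivalence invoked at the outset, that ``components of open sets are open'' is equivalent to local connectedness; this holds in an arbitrary topological space, so it applies verbatim to the pluri-fine topology.
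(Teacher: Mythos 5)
Your proof is correct and follows essentially the same route as the paper, which presents Corollary \ref{co1} as an immediate consequence of Theorem \ref{th2}: the theorem supplies, inside any $\mathcal{F}$-open set, an $\mathcal{F}$-connected $\mathcal{F}$-neighborhood of each point, and local connectedness follows by the standard criterion that components of open sets are open. The paper leaves this purely topological deduction implicit; you have simply spelled it out correctly.
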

The proof of the local connectedness in the present paper is
conceptually easier, but uses much more information on the
structure of $\mathcal{F}$-open sets, whereas the proof in
\cite{SJ} reveals that one can find an explicit
$\mathcal{F}$-neighborhood basis consisting of
$\mathcal{F}$-domains, cf. Remark \ref{rema1} below.

In Section \ref{final} we give a definition of
$\mathcal{F}$-plurisubharmonic functions and obtain the following
result.
\begin{theorem}\label{th3}
Let $f$ be an $\mathcal{F}$-plurisubharmonic function on an
$\mathcal{F}$-domain $\Omega$. If $f=-\infty$ on an
$\mathcal{F}$-open subset of $\Omega$, then $f\equiv-\infty$.
\end{theorem}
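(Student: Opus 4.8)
The plan is to argue that the ``$-\infty$ locus'' of $f$ is $\mathcal F$-clopen and then invoke the $\mathcal F$-connectedness of $\Omega$. Concretely, let $W$ denote the $\mathcal F$-interior of $\{z\in\Omega : f(z)=-\infty\}$, i.e.\ the largest $\mathcal F$-open subset of $\Omega$ on which $f\equiv-\infty$. By hypothesis $W\neq\emptyset$, and $W$ is $\mathcal F$-open by construction; the goal is to show $W=\Omega$, for which it suffices to prove that $W$ is $\mathcal F$-closed in $\Omega$. Throughout I use that $f$ is $\mathcal F$-upper semicontinuous and that its restriction to every complex line $L$ is, on each fine component of $\Omega\cap L$, either finely subharmonic or $\equiv-\infty$; this is what the definition of $\mathcal F$-plurisubharmonicity in Section \ref{final} should provide.

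The first ingredient is a line-wise characterization of $W$ obtained by combining Theorem \ref{th2} with the one-dimensional fine identity theorem. For a point $z$ and a line $L\ni z$ write $C_L(z)$ for the $\mathcal F$-component of $z$ in $\Omega\cap L$. I claim that $z\in W$ if and only if $f\equiv-\infty$ on $C_L(z)$ for every complex line $L$ through $z$. For the forward direction, $W\cap L$ is a finely open neighborhood of $z$ in $L$ on which $f=-\infty$, so $f|_L$ is a finely subharmonic function equal to $-\infty$ on a nonempty finely open subset of the finely connected set $C_L(z)$; by fine potential theory such a function is $\equiv-\infty$ there (equivalently, its $-\infty$ set, being polar, has empty fine interior). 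For the converse, if $f\equiv-\infty$ on every $C_L(z)$, then $f\equiv-\infty$ on $\bigcup_{L\ni z}C_L(z)$, which by Theorem \ref{th2} is an $\mathcal F$-neighborhood of $z$; hence $z\in W$.

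It remains to prove that $W$ is $\mathcal F$-closed in $\Omega$, which is the heart of the matter. Let $z_0\in\Omega$ lie in the $\mathcal F$-closure of $W$. By the characterization just established it is enough to show that for \emph{every} line $L\ni z_0$ the component $C_L(z_0)$ meets $W$: once $C_L(z_0)\cap W\neq\emptyset$, the same one-dimensional identity theorem forces $f\equiv-\infty$ on $C_L(z_0)$, and then $z_0\in W$. Since $z_0\in\overline{W}^{\,\mathcal F}$, the $\mathcal F$-neighborhood $\bigcup_{L\ni z_0}C_L(z_0)$ certainly meets $W$, so at least one line through $z_0$ has this property; the whole difficulty is upgrading ``some line'' to ``every line.''

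The main obstacle is exactly this upgrade: passing from a line $L$ whose component meets $W$ to an arbitrary line $L'$ through $z_0$, one only transports the single value $f(z_0)=-\infty$, and a lone $-\infty$ value (a polar singleton) is far too weak to force the finely subharmonic function $f|_{L'}$ to vanish identically on $C_{L'}(z_0)$. To control all directions simultaneously I would exploit the fine structural description of $\mathcal F$-open sets underlying Proposition \ref{pr2} --- which governs how an $\mathcal F$-neighborhood of $z_0$ decomposes along the complex lines through $z_0$ --- to show that the set of directions $L$ with $C_L(z_0)\cap W=\emptyset$ cannot persist while $z_0$ remains in $\overline{W}^{\,\mathcal F}$. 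Where an exceptional, pluripolar set of directions or points threatens to appear, I would discard it using Theorem \ref{th1}, whose very purpose is to guarantee that deleting a pluripolar set does not disconnect an $\mathcal F$-domain, and then recover the missing components by $\mathcal F$-connectedness. Once $W$ is shown to be $\mathcal F$-closed, the $\mathcal F$-connectedness of $\Omega$ gives $W=\Omega$, that is, $f\equiv-\infty$.
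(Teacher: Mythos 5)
Your first ingredient --- the characterization of the $\mathcal F$-interior $W$ of $\{f=-\infty\}$ by means of Theorem \ref{th2} and Fuglede's one-dimensional identity theorem (Theorem 12.9 in \cite{Fu72}) --- is correct, and these are indeed the tools the paper uses. But the heart of the matter, showing that $W$ is $\mathcal F$-closed in $\Omega$, is precisely where your proposal stops being a proof. You reduce it to showing that for an $\mathcal F$-boundary point $z_0$ of $W$, \emph{every} line $L\ni z_0$ satisfies $C_L(z_0)\cap W\neq\emptyset$; you correctly observe that $z_0\in\overline{W}^{\,\mathcal F}$ only yields \emph{some} such line; and the proposed upgrade is pure intention (``I would exploit\dots'', ``I would discard\dots'') with no actual mechanism. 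Moreover, the fallback you invoke is circular: in this paper Theorem \ref{th1} is deduced as a corollary of the very statement you are proving (via Theorem \ref{th6a} and the corollary following it), so it cannot enter this proof.

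The idea you are missing is a change of center: the paper never decomposes along lines through the boundary point $z_0$. Normalize $z_0=0$, take $V=B(0,1)\cap\{\varphi>0\}\subset\Omega$ with $\varphi(0)=1$, and put $V_{1/2}=B(0,1/2)\cap\{\varphi>1/2\}$. For a point $z\in V_{1/2}\cap W$ and \emph{any} line $L\ni z$ there is no ``some line versus every line'' problem at all: $z$ itself lies in $W$, so the component $C_{z,L}$ of $\{\varphi>0\}\cap B(z,\delta_{z,L})\cap L$ containing $z$ meets $W$ in a nonempty finely open set, and the one-dimensional identity theorem gives $f\equiv-\infty$ on $C_{z,L}$ for every $L$. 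The crucial quantitative input is Lemma \ref{le0} (resting on the Beurling--Nevanlinna projection theorem) combined with Lemma \ref{le2}: the radii $\delta_{z,L}$ can be taken larger than a constant $\kappa>0$ that is \emph{uniform} in both $z\in V_{1/2}$ and $L$. Hence $f\equiv-\infty$ on the $\mathcal F$-open set $V\cap B(z,\kappa)\subset\bigcup_{L\ni z}C_{z,L}$, and this set does not shrink as $z$ approaches $0$. Since $0\in\overline{W}^{\,\mathcal F}$, you may choose $z\in V_{1/2}\cap W$ with $|z|<\kappa$; then $0\in V\cap B(z,\kappa)$, so $0$ lies in the $\mathcal F$-interior of $\{f=-\infty\}$, i.e.\ $0\in W$, contradicting the choice of $0$ as an $\mathcal F$-boundary point. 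Thus $\partial_{\mathcal F}W$ misses $\Omega$, and $W=\Omega$ by $\mathcal F$-connectedness. Without this uniformity-of-$\kappa$ argument, or a genuine substitute for it, your proof has a real gap, not a routine one.
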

In fine potential theory a much more precise result holds (cf.
\cite{Fu72}, Theorem 12.9). Nevertheless, Theorem \ref{th3} will
turn out to be very useful. Besides its key role in the proof of
Theorem \ref{th1}, it entails an interesting consequence for the
study of pluripolar hulls of graphs. Namely, the main result of
Edlund and J\"oricke in \cite{EJ} can be extended to functions of
several complex variables. We will discuss this in Section
\ref{final1}.

\section{Preliminaries}
We fix the following notation: $\DD(a,r)=\{|z-a|<r\}$,
$\DD=\DD(0,1)$, $C(a,r)=\{|z-a|=r\}$, while
$B(a,R)=\{\|z-a\|<R\}\subset\CC^n$.
\subsection{Harmonic measure}
Let $\Omega$ be an open set in the complex plane $\CC$ and let $E
\subseteq \overline{\Omega}$. Subharmonic functions on $\Omega$
are denoted by $\sh(\Omega)$, while
$\sh^-(\Omega)=\{u\in\sh(\Omega): \ u\le 0\}$. The harmonic
measure (or the relative extremal function) of $E$ (relative to
$\Omega$) at $z \in \Omega$ is defined as follows (see,
e.g.~\cite{Ra})
$$
\omega(z, E, \Omega)= \sup\{u(z): \ u \in \sh_-(\Omega),\
\limsup_{\Omega\ni v \rightarrow \zeta }u(v)\leq -1 \ \text{for} \
\zeta \in E\}.
$$
This function need not be subharmonic in $\Omega$, but its upper
semi-continuous regularization
$$
\omega(z, E, \Omega)^{*}=\limsup_{\Omega\ni v \rightarrow z
}\omega(v, E, \Omega)\geq \omega(v, E, \Omega)
$$
is subharmonic. If $E$ is a closed subset of $\Omega$, then
$\omega(. , E, \Omega)$ coincides with the Perron solution of the
Dirichlet problem in $\Omega \backslash E$ with boundary values
$-1$ on $\partial E \cap \Omega$ and $0$ on $\partial \Omega
\backslash \partial E$. \\

Recall the following result, cf. \cite{Bre} and \cite{Cho}.
\begin{theorem}\label{th5} Let $\Omega $ be a bounded open subset of
$\CC$. If $E \subset \Omega $ is a Borel set, then there exists an
increasing sequence of compact sets $K_{j} \subset E$ such that
$\omega(z, K_{j}, \Omega)^{*} \downarrow \omega(z, E,
\Omega)^{*}$.
\end{theorem}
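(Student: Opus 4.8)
The plan is to recognize $-\omega(\cdot,E,\Omega)^{*}$ as the regularized reduced function (balayage) $\widehat{R}^{E}_{1}$ of the constant $1$ over $E$ relative to $\Omega$: writing $w=-u$ turns the defining family into nonnegative superharmonic functions $w$ with $\liminf w\ge 1$ on $E$, so that $-\omega(\cdot,E,\Omega)=R^{E}_{1}$ and $-\omega(\cdot,E,\Omega)^{*}=\widehat{R}^{E}_{1}$. In this language the assertion becomes the inner regularity of balayage: for Borel $E$ there should exist increasing compacts $K_{j}\subset E$ with $\widehat{R}^{K_{j}}_{1}\uparrow\widehat{R}^{E}_{1}$. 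Monotonicity is immediate ($A\subset B$ gives $\widehat{R}^{A}_{1}\le\widehat{R}^{B}_{1}$), so the whole point is to produce compacts exhausting $E$ from inside, and for this I would invoke Choquet's capacitability theorem.

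To apply it I would first attach to $\Omega$ its Green capacity $c(\cdot)$, e.g.\ $c(E)=\int_{\Omega}\widehat{R}^{E}_{1}\,d\nu$ for a fixed reference measure $\nu$, or the Green energy of the equilibrium measure, and then verify the three axioms of a Choquet capacity. Monotonicity is clear. Continuity along increasing sequences of arbitrary sets, $c(E_{n})\uparrow c(E)$ for $E_{n}\uparrow E$, would follow from the classical convergence property of reduced functions $\widehat{R}^{E_{n}}_{1}\uparrow\widehat{R}^{E}_{1}$. Right-continuity along decreasing sequences of compacts, $c(K_{n})\downarrow c(K)$ for $K_{n}\downarrow K$, would follow from the fundamental convergence theorem together with the outer regularity of r\'eduites over compacta. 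With these in hand, Choquet's theorem gives that every Borel (indeed analytic) set is capacitable: $c(E)=\sup\{c(K):K\subset E\text{ compact}\}$.

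Finally I would choose increasing compacts $K_{j}\subset E$ with $c(K_{j})\uparrow c(E)$. By monotonicity the functions $\widehat{R}^{K_{j}}_{1}$ increase to $\widehat{R}^{F}_{1}$, where $F=\bigcup_{j}K_{j}\subset E$, and the equality $c(F)=\lim_{j}c(K_{j})=c(E)$ forces $E\setminus F$ to be negligible (polar) for this capacity; hence $\widehat{R}^{F}_{1}=\widehat{R}^{E}_{1}$ quasi-everywhere, and then everywhere, since both sides are regularizations of subharmonic functions agreeing off a polar set. Translating back through $-\omega^{*}=\widehat{R}_{1}$ yields $\omega(z,K_{j},\Omega)^{*}\downarrow\omega(z,E,\Omega)^{*}$ at every $z\in\Omega$. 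I expect the main obstacle to be the verification of the two continuity axioms -- particularly right-continuity along decreasing compacta, where the fundamental convergence theorem and a careful treatment of the upper semicontinuous regularization are needed -- and the final identification step, which upgrades the equality of a single numerical capacity to the pointwise convergence of the full extremal functions throughout $\Omega$.
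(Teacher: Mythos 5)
A preliminary remark: the paper does not prove this statement at all; it recalls it as a classical fact with pointers to Brelot and Choquet, and the capacitability argument you outline is precisely the route taken in those sources. Your main ingredients are correct: the identification $-\omega(\cdot,E,\Omega)^{*}=\widehat{R}^{E}_{1}$ (valid because subharmonic competitors satisfy $\limsup_{v\to\zeta}u(v)=u(\zeta)$ at points $\zeta\in E\subset\Omega$); the choice of capacity $c(A)=\int_{\Omega}\widehat{R}^{A}_{1}\,dm$ with $m$ Lebesgue measure (finite, since $\Omega$ is bounded and $0\le\widehat{R}^{A}_{1}\le 1$); the verification of the two Choquet axioms (increasing continuity from the classical convergence theorem for r\'eduites, right-continuity on compacts from lower semicontinuity of competitors plus the fact that $\{R^{K}_{1}\neq\widehat{R}^{K}_{1}\}$ is polar, hence Lebesgue-null); and the appeal to Choquet's theorem for Borel sets. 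Passing to an increasing sequence of compacts by taking finite unions is also harmless.

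The genuine flaw is in your final identification step: the claim that $c(F)=c(E)$ with $F=\bigcup_{j}K_{j}\subset E$ ``forces $E\setminus F$ to be polar'' is false, and no argument along those lines can work. Take $\Omega=\DD(0,2)$, $F=\DD(0,1)$ and $E=\overline{\DD(0,1)}$: any superharmonic competitor $w$ for $F$ is finely continuous, and the open disc is non-thin at every point of $C(0,1)$, so $w\ge 1$ on $F$ forces $w\ge 1$ on all of $E$; hence $\widehat{R}^{F}_{1}=\widehat{R}^{E}_{1}$ and $c(F)=c(E)$, while $E\setminus F=C(0,1)$ is far from polar. So equality of capacities of $F\subset E$ never tells you that the difference is small as a set, and the subsequent ``agreeing off a polar set'' justification inherits this error. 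Fortunately the claim is not needed: monotonicity gives $\widehat{R}^{F}_{1}\le\widehat{R}^{E}_{1}$, and $c(F)=c(E)$ says the integral of the nonnegative difference vanishes, so $\widehat{R}^{F}_{1}=\widehat{R}^{E}_{1}$ Lebesgue-a.e.; since both are superharmonic on $\Omega$ and a superharmonic function is recovered pointwise from its solid means, they agree everywhere. Inserting this one-line substitute for the polarity claim, and using the increasing-union convergence theorem (which you already invoked for the second Choquet axiom) to get $\widehat{R}^{K_{j}}_{1}\uparrow\widehat{R}^{F}_{1}$, your proof becomes complete and coincides with the classical one.
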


Let $E\subset\DD$. We associate to $E$ its circular projection
$$
E^{\circ}=\{|z|: \ z \in E \}.
$$
There is extensive literature on harmonic measure and its behavior
under geometric transformations such as projection,
symmetrization, and polarization. We refer to \cite{Ra} and the
survey article \cite{Bet} and the references therein.

Our main tool in this paper is the following classical theorem of
A. Beurling and R. Nevanlinna related to the Carleman-Milloux
problem, cf. \cite{Beu} and \cite{Ne33}. See also \cite{Bet}.

\begin{theorem}\label{th4} Let $F\subset\DD$ be compact. Let
$F^{\circ}$ be its circular projection. Then
$$
\omega(z, F, \DD)\leq \omega(-|z|, F^{\circ}, \DD), \ \text{for \
all} \  z\in \DD\backslash F.
$$
\end{theorem}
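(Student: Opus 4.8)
The plan is to derive Theorem~\ref{th4} from the sharper \emph{circular symmetrization} inequality together with the elementary monotonicity of harmonic measure in its set-argument. Write $\rho=\abs{z}$ and view $F^{\circ}=\{\abs{w}:w\in F\}$ as a compact subset of the positive real axis. For a compact $K\subset\DD$ let $K^{\#}$ be its circular symmetrization onto the positive real axis: on each circle $C(0,r)$ one replaces $K\cap C(0,r)$ by the closed arc centred at the point $r$ having the same angular measure (a single point when that measure vanishes). Every radius occurring in $K$ survives, so $F^{\circ}\subseteq F^{\#}$. Granting
\begin{equation}\label{symm}
\omega(z,F,\DD)^{*}\le\omega(-\rho,F^{\#},\DD)^{*},
\end{equation}
the theorem follows: since $F^{\circ}\subseteq F^{\#}$, the defining supremum for $\omega(\cdot,F^{\#},\DD)$ runs over a smaller admissible family than that for $\omega(\cdot,F^{\circ},\DD)$ (enlarging the target set only adds constraints of the form $\limsup\le-1$), whence $\omega(-\rho,F^{\#},\DD)^{*}\le\omega(-\rho,F^{\circ},\DD)^{*}$. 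Combining this with \eqref{symm} and with $\omega(z,F,\DD)\le\omega(z,F,\DD)^{*}$ gives $\omega(z,F,\DD)\le\omega(-\rho,F^{\circ},\DD)$ for $z\in\DD\setminus F$, the regularization being redundant at the interior point $-\rho\in\DD\setminus F^{\circ}$ where $\omega(\cdot,F^{\circ},\DD)$ is harmonic.

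The crux is the symmetrization inequality~\eqref{symm}, and here the geometry is genuinely used: a set distributed over many angles is compared with its radial condensate, \emph{evaluated at the antipodal point} $-\rho$, which lies farthest from $F^{\#}$. I would prove \eqref{symm} by polarization. For a line $\ell$ through the origin let $\sigma$ be reflection in $\ell$, let $H^{+}$ be the closed half-plane containing the positive axis, and let $P_{\ell}K$ be the polarization of $K$ (so a pair $\{w,\sigma w\}$ with $w\in H^{+}$ is pushed towards $H^{+}$). Applying the maximum principle to the difference $u(w)-u(\sigma w)$, with $u=\omega(\cdot,K,\DD)^{*}$ subharmonic and vanishing on $\ell$, on the half-disc $H^{+}\cap\DD$ shows that each polarization does not decrease harmonic measure there; taking a sequence of polarization directions that is dense in angle yields $P_{\ell_{1}}\cdots P_{\ell_{m}}K\to K^{\#}$ and, in the limit, \eqref{symm}. (Equivalently one may invoke Baernstein's star-function theorem, which directly asserts that circular symmetrization does not decrease harmonic measure; see the survey \cite{Bet}.) This is the main obstacle: the per-step reflection estimate is clean, but the convergence of the iterated polarizations to $K^{\#}$ and the attendant bookkeeping of angular measures require care.

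Finally, Theorem~\ref{th5} is the approximation device that makes these limits rigorous and handles the upper regularizations $\omega(\cdot,\cdot,\DD)^{*}$ on which the maximum principle relies. Approximating $F$ from inside by an increasing sequence of compacta $K_{j}\subset F$ with $\omega(\cdot,K_{j},\DD)^{*}\downarrow\omega(\cdot,F,\DD)^{*}$, and applying the same monotone convergence to $F^{\#}$ so that $\omega(-\rho,K_{j}^{\#},\DD)^{*}\downarrow\omega(-\rho,F^{\#},\DD)^{*}$, one sees that the inequality \eqref{symm}, once established for each $K_{j}$, passes to $F$: both sides decrease to the correct values, so the limiting inequality is preserved. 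Chained with the monotonicity step above, this yields Theorem~\ref{th4}.
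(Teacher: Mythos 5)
The paper itself contains no proof of Theorem~\ref{th4}: it is quoted as a classical result of Beurling and Nevanlinna (with \cite{Beu}, \cite{Ne33}, \cite{Bet} cited), so your argument must stand on its own. Its outer layer does stand: $F^{\circ}\subseteq F^{\#}$, the monotonicity of the extremal function in its set argument (with the sign convention of Section 2), and the removal of the regularizations at the point $-\rho\notin F^{\circ}$ are all correct, so the theorem would indeed follow from your symmetrization inequality. The genuine gap is in the polarization step that is supposed to prove that inequality. The per-step claim you make --- polarizing $K$ toward $H^{+}$ does not decrease harmonic measure \emph{on} $H^{+}$ --- is false, and it is also not the inequality you need. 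Counterexample (stated in the classical positive convention, where harmonic measure is a hitting probability): let $\ell$ be the imaginary axis, $H^{+}$ the closed right half-plane, and $K$ the union of two discs of radius $\eps$ centred at $q$ and $-q$, with $q$ in the open left half-plane and not real. Then $P_{\ell}K$ is the union of the discs centred at $\sigma q$ and $-q$, both in $H^{+}$. Evaluate at $0\in\ell\subset H^{+}$: the harmonic measure of each disc separately is unchanged (the centres keep their moduli, and harmonic measure at $0$ is rotation invariant), while the mutual Green function of the centres strictly increases, since $g_{\DD}(\sigma q,-q)>g_{\DD}(\sigma q,\sigma(-q))=g_{\DD}(q,-q)$ by the reflection inequality; hence the probability of hitting \emph{both} discs increases, and by inclusion--exclusion the harmonic measure of the union at $0$ (and, by continuity, at nearby interior points of $H^{+}$) strictly decreases for small $\eps$. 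The true, and needed, statement is the opposite-sided one: polarization toward $H^{+}$ does not \emph{increase} harmonic measure at points of $H^{-}$. It is the relevant one because $-\rho\in H^{-}$ for every admissible line $\ell$; moreover, to apply any such per-step inequality you must first rotate $F$ so that the evaluation point of the left-hand side also becomes $-\rho$ --- your chain compares a value at $z$ with a value at $-\rho$ throughout and never makes this reduction.

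Even for the correct-sided statement, your proposed proof does not work. The function $u(w)-u(\sigma w)$ involves the set $K$ alone, so no maximum principle applied to it can compare $\omega(\cdot,K,\DD)$ with $\omega(\cdot,P_{\ell}K,\DD)$; and the honest attempt --- the maximum principle for $v-u$ with $v=\omega(\cdot,P_{\ell}K,\DD)^{*}$ on the half-disc --- founders precisely on the boundary piece $\ell\cap\DD$, where the boundary estimate one needs is the inequality one is trying to prove. Closing that circle requires a genuinely different device (for instance a reflection/excursion argument for Brownian paths started on $\ell$, or an outright citation of the polarization theorem for harmonic measure from the literature; see \cite{Bet} and the references there). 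The limit step is also unresolved: convergence of the iterates $P_{\ell_{1}}\cdots P_{\ell_{m}}K$ to $K^{\#}$ as sets does not imply convergence of their harmonic measures (harmonic measure is not continuous under Hausdorff-type set convergence); your appeal to Theorem~\ref{th5} in the final paragraph is vacuous, since $F$ is already compact and that theorem says nothing about the polarization iterates; and Baernstein's star-function theorem controls integral means over centred arcs, not the pointwise value at the antipodal point $-\rho$, so it is not a drop-in substitute without further argument. In short, the skeleton (symmetrization plus set-monotonicity implies projection) is sound and could be repaired by quoting the opposite-sided polarization theorem together with a genuine convergence argument, but as written the central step is both misstated --- indeed false where you assert it --- and unproven where it is needed.
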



We will need the following result, observed in \cite{BT}, cf.
\cite{SJ}, Lemma 3.1.
\begin{theorem}\label{th6}Sets of the form
$$
\Omega_{B(z, r) ,  \varphi ,c} = \{w\in B(z, r) :\varphi (w)> c\},
$$
where $\varphi \in \psh(B(z, r))$ and $c \in \RR$, constitute a
base of the pluri-fine topology on $\CC^{n}$.
\end{theorem}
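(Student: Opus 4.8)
The plan is to check the two requirements for a base: that each $\Omega_{B(z,r),\varphi,c}$ is $\mathcal{F}$-open, and that every $\mathcal{F}$-open set is a union of such sets. The first requirement is immediate. Since the pluri-fine topology makes every plurisubharmonic function continuous, $\varphi$ is $\mathcal{F}$-continuous on $B(z,r)$, so its superlevel set $\{w\in B(z,r):\varphi(w)>c\}$ is $\mathcal{F}$-open in $B(z,r)$; as $B(z,r)$ is Euclidean-open, hence $\mathcal{F}$-open, the set $\Omega_{B(z,r),\varphi,c}$ is $\mathcal{F}$-open.

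For the second requirement I would first pin down a convenient subbase. Because the real and imaginary parts of the coordinate functions are pluriharmonic, both they and their negatives are plurisubharmonic, so the pluri-fine topology is automatically finer than the Euclidean one. Moreover, a plurisubharmonic (hence upper semicontinuous) $u$ is $\mathcal{F}$-continuous precisely when each superlevel set $\{u>c\}$ is $\mathcal{F}$-open, the sets $\{u<c\}$ being already Euclidean-open. Thus the Euclidean-open sets together with the sets $\{u>c\}$, where $u$ is plurisubharmonic on a Euclidean-open set and $c\in\RR$, form a subbase. Consequently, given an $\mathcal{F}$-open $U$ and a point $w_{0}\in U$, there are a Euclidean ball $B_{0}\ni w_{0}$, functions $u_{1},\dots,u_{k}$ plurisubharmonic on $B_{0}$, and constants $c_{1},\dots,c_{k}$ with $w_{0}\in B_{0}\cap\bigcap_{i=1}^{k}\{u_{i}>c_{i}\}\subseteq U$.

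The heart of the argument, and the step I expect to be the main obstacle, is to replace this finite intersection by a single superlevel set. The naive choice $\varphi=\min_{i}(u_{i}-c_{i})$ is useless because the minimum of plurisubharmonic functions need not be plurisubharmonic. Instead I would take the sum $\varphi=\sum_{i=1}^{k}(u_{i}-c_{i})$, which is plurisubharmonic, and control the unwanted terms by upper semicontinuity. Set $a_{i}=u_{i}(w_{0})-c_{i}>0$. For a prescribed $\eps>0$, upper semicontinuity furnishes a ball $B=B(w_{0},r)\subseteq B_{0}$ on which $u_{i}-c_{i}<a_{i}+\eps$ for every $i$. If some $w\in B$ satisfies $u_{j}(w)\le c_{j}$, then estimating the remaining $k-1$ terms gives
$$\varphi(w)\;\le\;\sum_{i\neq j}(a_{i}+\eps)\;\le\;\Big(\sum_{i=1}^{k}a_{i}-\min_{1\le j\le k}a_{j}\Big)+(k-1)\eps\;=:\;c.$$
Hence $\{w\in B:\varphi(w)>c\}$ contains no point at which some $u_{j}\le c_{j}$, i.e.\ it is contained in $\bigcap_{i}\{u_{i}>c_{i}\}$. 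Finally, since $\varphi(w_{0})=\sum_{i}a_{i}$, choosing $\eps<(\min_{j}a_{j})/(k-1)$ forces $\varphi(w_{0})>c$, so that $w_{0}\in\Omega_{B,\varphi,c}$. Together with $\Omega_{B,\varphi,c}\subseteq B\subseteq B_{0}$ this yields $w_{0}\in\Omega_{B,\varphi,c}\subseteq U$, which establishes the base property. (The case $k=1$ is handled directly with $\varphi=u_{1}-c_{1}$ and $c=0$.)
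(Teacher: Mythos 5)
Your second half --- the reduction of a finite intersection $B_{0}\cap\bigcap_{i=1}^{k}\{u_{i}>c_{i}\}$ to a single superlevel set via $\varphi=\sum_{i}(u_{i}-c_{i})$, with the cutoff $c=\sum_{i}a_{i}-\min_{j}a_{j}+(k-1)\varepsilon$ and the choice $\varepsilon<\min_{j}a_{j}/(k-1)$ --- is correct, and it is essentially the standard device (the paper itself gives no proof of this theorem; it quotes it from Bedford and Taylor \cite{BT}). The genuine gap is in the half you dismiss as ``immediate''. With the paper's definition, the pluri-fine topology on $\CC^{n}$ is the coarsest topology making continuous the functions in $\psh(\CC^{n})$; a function $\varphi\in\psh(B(z,r))$ that does not extend to $\CC^{n}$ is simply not one of the generating functions, so no appeal to the definition can show that $\{w\in B(z,r):\varphi(w)>c\}$ is $\mathcal{F}$-open. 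What your sentence actually uses is that the \emph{intrinsic} pluri-fine topology of $B(z,r)$ (which makes $\varphi$ continuous by fiat) coincides with the \emph{trace} on $B(z,r)$ of the pluri-fine topology of $\CC^{n}$. One inclusion is trivial, but the one you need --- every intrinsically $\mathcal{F}$-open subset of the ball is $\mathcal{F}$-open in $\CC^{n}$ --- is precisely the nontrivial localization content of the theorem, and it is the half that the paper actually exploits later (in Proposition \ref{pr2} and Theorem \ref{th6a} the functions $\varphi$ live only on balls). The same unproved localization is hidden in your subbase claim, where you allow $u$ ``plurisubharmonic on a Euclidean-open set''; note that your second half does not need this generality, since the generators furnished by the definition are already global.

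The missing step can be supplied by a max-gluing argument. Given $w_{0}$ with $\varphi(w_{0})>c$, choose radii $|w_{0}-z|<\rho<\rho''<\rho'<r$, put $M=\sup_{\overline{B(z,\rho')}}\varphi<\infty$ and $g(w)=A\bigl(\|w-z\|^{2}-\rho^{2}\bigr)+c$ with $A$ so large that $g>M$ on $\{\|w-z\|\geq\rho''\}$. Then $\max(\varphi,g)=g$ on the annulus $\{\rho''\leq\|w-z\|<\rho'\}$, so the function $\psi$ defined as $\max(\varphi,g)$ on $B(z,\rho')$ and as $g$ elsewhere is well defined and plurisubharmonic on all of $\CC^{n}$; moreover $g<c$ on $B(z,\rho)$, whence $\{\psi>c\}\cap B(z,\rho)=\{\varphi>c\}\cap B(z,\rho)$. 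This exhibits an $\mathcal{F}$-open set (for the global definition: a superlevel set of $\psi\in\psh(\CC^{n})$ intersected with a Euclidean ball) containing $w_{0}$ and contained in $\Omega_{B(z,r),\varphi,c}$; since $w_{0}$ was arbitrary, the basic sets are indeed $\mathcal{F}$-open. With this lemma inserted, your proof is complete.
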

\begin{remark}\label{rema1} It follows from the proof of Theorem 1.1 in
\cite{SJ} that each point in an $\mathcal{F}$-open set has a
neighborhood basis consisting of $\mathcal{F}$-domains of the form
$\Omega_{B(z, r), \varphi ,c}$.
\end{remark}

\section{Estimates for Subharmonic Functions}
\begin{lemma}\label{le0} For every $d<c<0$ there exists $\kappa>0$ such that
for every $\varphi \in \sh_{-}(\DD)$ with $\varphi(0)> c$ and for
every point $a$ in the $\mathcal{F}$-open set $$V=\{z \in \DD(0,
1/8): \ \varphi (z)> c\}$$ the set
$$\Omega= \{z\in \DD: \ \varphi (z)\geq d\}$$
contains a circle $C(a,\delta_{\varphi , a})$ with radius
$\delta_{\varphi , a}>\kappa$.
\end{lemma}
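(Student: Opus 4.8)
The plan is to argue by contradiction and to reduce the statement, via the Beurling--Nevanlinna estimate (Theorem \ref{th4}), to a computation of the harmonic measure of a radial slit. Fix once and for all a radius $\rho\in(0,7/8)$, say $\rho=3/4$. Suppose the conclusion fails for a value $\kappa>0$ still to be chosen: there are $\varphi\in\sh_{-}(\DD)$ with $\varphi(0)>c$ and a point $a\in V$ such that every circle $C(a,\delta)$ with $\kappa<\delta<\rho$ meets the bad set $B=\{z\in\DD:\varphi(z)<d\}$. Since $a\in\DD(0,1/8)$, the disc $D_a=\DD(a,R)$ with $R=1-|a|\in(7/8,1)$ is contained in $\DD$ and contains $\overline{\DD(a,\rho)}$.

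First I would compare $\varphi$ with harmonic measure. Put $F=\{z:\varphi(z)\le d\}\cap\overline{\DD(a,\rho)}$, a compact subset of $D_a$, and note $a\notin F$ because $\varphi(a)>c>d$. As $\varphi\le 0$ on $\DD$ and $\varphi\le d$ on $F$, upper semicontinuity shows that $\varphi/(-d)\in\sh_{-}(D_a)$ satisfies $\limsup\varphi/(-d)\le -1$ at every point of $F$, so it is an admissible competitor in the definition of $\omega(\cdot,F,D_a)$. Hence $\varphi(a)\le (-d)\,\omega(a,F,D_a)=|d|\,\omega(a,F,D_a)$. By the contradiction hypothesis every circle $C(a,\delta)$, $\kappa<\delta<\rho$, meets $B\subset\{\varphi\le d\}$ inside $\overline{\DD(a,\rho)}$, so the circular projection of $F$ about the centre $a$ contains the whole interval $(\kappa,\rho)$.

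Next I would transport the estimate to the unit disc. The affine map $w=(z-a)/R$ sends $D_a$ onto $\DD$ and $a$ to $0$; by conformal invariance $\omega(a,F,D_a)=\omega(0,\widetilde F,\DD)$, where $\widetilde F=(F-a)/R$ is compact, omits $0$, and has circular projection $\widetilde F^{\circ}\supset(\kappa/R,\rho/R)$. Applying Theorem \ref{th4} at the centre (so that $-|0|=0$) and then monotonicity of $\omega$ in the set yields
\[
\omega(0,\widetilde F,\DD)\le\omega(0,\widetilde F^{\circ},\DD)\le\omega\bigl(0,[\kappa/R,\rho/R],\DD\bigr),
\]
so that $\varphi(a)\le|d|\,\omega\bigl(0,[\kappa/R,\rho/R],\DD\bigr)$.

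Finally I would estimate the harmonic measure of the radial slit, which is where the crux lies. The M\"obius automorphism $\psi(\zeta)=(\zeta-s)/(1-s\zeta)$ of $\DD$ fixes $1$ and sends $s\mapsto0$, $0\mapsto -s$, so it carries $[s,\beta]$ to a slit $[0,\psi(\beta)]$ whose endpoint $0$ is a regular boundary point; hence $\omega(0,[s,\beta],\DD)=\omega(-s,[0,\psi(\beta)],\DD)\to\omega(0,[0,\beta],\DD)=-1$ as $s\downarrow0$, for each fixed $\beta\in(0,1)$. Using $\kappa/R\le 8\kappa/7$, $\rho/R\ge\rho$ and the evident monotonicities, one gets $\omega\bigl(0,[\kappa/R,\rho/R],\DD\bigr)\le\omega\bigl(0,[8\kappa/7,\rho],\DD\bigr)=:\Phi(\kappa)$, a quantity depending only on $\kappa$ (with $\rho=3/4$ fixed) and tending to $-1$ as $\kappa\downarrow0$. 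Choosing $\kappa=\kappa(c,d)>0$ so small that $\Phi(\kappa)\le c/|d|$ then forces $\varphi(a)\le|d|\,\Phi(\kappa)\le c$, contradicting $a\in V$. Thus for this $\kappa$ some circle $C(a,\delta)$ with $\kappa<\delta<\rho<1-|a|$ avoids $B$ and so lies in $\Omega=\{\varphi\ge d\}$, which is the desired conclusion; the bound on $\kappa$ depends only on $c$ and $d$, as required. The main obstacle is precisely the last step: controlling the boundary behaviour of $\omega$ so that the radial slit, as its inner endpoint approaches the centre, drives the central harmonic measure down to $-1$ uniformly in $a$ and $\varphi$ (Theorem \ref{th5} can be invoked here to pass safely between a Borel set and its compact exhaustion if needed).
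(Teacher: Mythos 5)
Your overall architecture is the same as the paper's: use $\varphi/|d|$ as a competitor for a relative extremal function, push the estimate through the Beurling--Nevanlinna projection theorem, and finish with the fact that the harmonic measure (at the center) of a radial slit $[\alpha,\beta]$ tends to $-1$ as $\alpha\downarrow 0$. But there is a genuine gap at the single most delicate point: you assert that $F=\{\varphi\le d\}\cap\overline{\DD(a,\rho)}$ is compact, and it need not be. Upper semicontinuity of $\varphi$ makes $\{\varphi\ge d\}$ closed and $\{\varphi<d\}$ open; the non-strict sublevel set $\{\varphi\le d\}$ is only a $G_\delta$. For instance, if $\varphi(z)=\sum_n 2^{-n}\log(|z-1/n|/2)$, suitably rescaled so that $\varphi(0)>d$, then $1/n\in\{\varphi\le d\}$ for all $n$ while the limit point $0$ is not in that set. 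Since Theorem \ref{th4} is stated (and is genuinely a statement) about compact sets, your inequality $\omega(0,\widetilde F,\DD)\le\omega(0,\widetilde F^{\circ},\DD)$ is unjustified. This is not cosmetic: extending the projection inequality from compacta to the Borel set where $\varphi$ is small is precisely the technical heart of the lemma, and it is what the paper's proof spends most of its effort on.

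Your parenthetical fallback --- that Theorem \ref{th5} ``can be invoked here to pass safely between a Borel set and its compact exhaustion if needed'' --- does not repair the argument in the direction you need. If you exhaust $F$ by compacts $K_j$ as in Theorem \ref{th5} and apply Theorem \ref{th4} to each $K_j$, you get $\varphi(a)\le|d|\,\omega(0,\widetilde K_j^{\circ},\DD)$; but $\widetilde K_j^{\circ}$ is merely some compact subset of $\widetilde F^{\circ}$, and nothing forces it to contain the interval $[\kappa/R,\rho/R]$ (Theorem \ref{th5} does not even guarantee $\bigcup_j K_j=F$). Monotonicity then only bounds $\omega(0,\widetilde K_j^{\circ},\DD)$ from \emph{below} by $\omega(0,\widetilde F^{\circ},\DD)$, which is useless here. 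The paper resolves this by reversing the order of operations: it runs the compact exhaustion of Theorem \ref{th5} on the \emph{projection} $E^{\circ}$ of the Euclidean-open set $E=\{\varphi<d\}$, then \emph{lifts} the chosen compact $K_{j_0}\subset E^{\circ}$ to a compact $L\subset E$ with $L^{\circ}=K_{j_0}$ (possible exactly because $E$ is open), applies Theorem \ref{th4} to $L$, and only afterwards compares with the slit. That lifting step --- exhaust the projection, pull compacts back into the set --- is the idea missing from your proof; without it, or some substitute such as continuity from below of the regularized harmonic measure along a true compact exhaustion $F_n\uparrow\{\varphi<d\}\cap\overline{\DD(a,\rho)}$ (a statement strictly stronger than Theorem \ref{th5} as quoted), the argument does not close.
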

\begin{proof} After multiplying $\varphi$ by a constant we can assume that
$d=-1$. Moreover, we may assume that the set $E=\{z\in \DD: \
\varphi (z)< d\}$ is non empty since otherwise the lemma trivially
holds.

Let $a \in V$ be fixed. We will first prove the following estimate
\begin{equation}\label{X}
\varphi (a)\leq \omega(a, E_{a}^{\circ}, \DD(a, 3/4))^{*},
\end{equation}
where $E_{a}^{\circ}= \{a+|z-a|: \ z \in E \cap \DD(a, 3/4) \}$.

Let $f$ be the function $f(z)=z+a$. Note that the circular
projection commutes with $f^{-1}$, i.e.,
$f^{-1}(E_{a}^{\circ})=(f^{-1}(E \cap \DD(a, 3/4)))^{\circ}$.
Hence, to prove (\ref{X}) it is enough, in view of the conformal
invariance of the harmonic measure, to prove that the estimate
(\ref{X}) holds for the particular point $a=0$, i.e.,
\begin{equation}\label{XX}
\varphi (0)\leq \omega(0, E^{\circ}, \DD(0, 3/4))^{*}.
\end{equation}
By Theorem \ref{th5}, there is an increasing sequence of compact
subset $K_{j}$ of $E^{\circ}$ such that
\begin{equation}\label{XXX}
\omega(0, K_{j}, \DD(0, 3/4))^{*} = \omega(0, K_{j}, \DD(0, 3/4))
\downarrow \omega(0, E^{\circ}, \DD(0, 3/4))^{*}.
\end{equation}
The equality in (\ref{XXX}) holds because $K_{j}$ is compact and
$0\notin K_{j}$. Let $\varepsilon > 0$. It follows from
(\ref{XXX}) that there exists a natural number $j_{0}$ such that
\begin{equation}\label{Y}
\omega(0, K_{j_{0}}, \DD(0, 3/4))\leq \omega(0, E^{\circ}, \DD(0,
3/4))^{*}+\varepsilon.
\end{equation}
We can find a compact set $L \subset E$ such that $L^{\circ} =
K_{j_{0}}$. By Theorem \ref{th4} together with inequality
(\ref{Y}) we get
\begin{equation}\label{YY}
\omega(0, L, \DD(0, 3/4))\leq \omega(0, K_{j_{0}}, \DD(0,
3/4))\leq \omega(0, E^{\circ}, \DD(0, 3/4))^{*}+ \varepsilon.
\end{equation}
Since $L \subset E$, and $\varphi (z) < -1$, for all $z \in E$,
inequality (\ref{YY}) implies the following estimate
\begin{equation}\label{Z}
\varphi (0) \leq \omega(0, E^{\circ}, \DD(0, 3/4))^{*}+
\varepsilon.
\end{equation}
As $\varepsilon$ is arbitrary, the estimate (\ref{XX}), and
therefore also (\ref{X}), follows.

Let now $\alpha \in ]0, 1/4[ $ be a constant such that $$I= \{z
\in \DD(a, 3/4): \ \Im z = \Im a, \ \text{and} \ \Re a+ \alpha\leq
\Re z \leq 1/2 \}\subset E_{a}^{\circ}.$$ Then by (\ref{X})
\begin{equation}\label{ZZ}
\varphi (a)\leq \omega(a, E_{a}^{\circ}, \DD(a, 3/4))^{*} \leq
\omega(a, I, \DD(a, 3/4)).
\end{equation}
Again by the conformal invariance, (\ref{ZZ}) yields
\begin{equation}\label{ZZZ}
\varphi (a)\leq \omega(0, f^{-1}(I), \DD(0, 3/4)).
\end{equation}
Since $f^{-1}(I)= [\alpha, 1/2-\Re a]$, it follows that
\begin{equation}\label{A}
\varphi (a)\leq \omega(0, [\alpha, 3/8], \DD(0, 3/4)).
\end{equation}
Let $\alpha_j \downarrow 0$ be a sequence decreasing to $0$. Since
$\alpha_j \rightarrow \omega(0, [\alpha_{j}, 3/8], \DD(0, 3/4))$
decreases to $-1$ (see e.g \cite{H69}, Theorem 8.38), there exists
a constant $0 < \kappa < 3/8$ depending only on $c$ but not on the
function $\varphi$ such that
\begin{equation}\label{AA}
\omega(0, [\kappa, 3/8], \DD(0, 3/4)) < c.
\end{equation}
The last inequality together with (\ref{A}) hence show that for
all $a \in V$,  the interval $I= \{z \in \DD(a, 3/4): \ \Im z =
\Im a, \ \text{and} \ \kappa+\Re a \leq \Re z \leq 3/8 \}$ can not
be a subset of $E_{a}^{\circ}$. We conclude that there exists a
$\delta_{\varphi , a} \in [\kappa, 1/2]$ such that
\begin{equation}\label{AAA}
\{z : \ |z-a|=\delta_{\varphi , a}\} \subset \Omega= \{z\in \DD: \
\varphi (z)\geq d\}.
\end{equation}
\end{proof}

For our purposes we don't need precise estimates for $\kappa$, but
these can be easily obtained using the formula of the harmonic
measure of an interval, cf.~\cite{Bar}.

\begin{lemma}\label{le2} Let $\varphi \in \sh(\DD)$ such that $0\leq \varphi \leq
1$. Let $U$ be the $\mathcal{F}$-open subset of $\DD$ where
$\varphi> 0$. Suppose that there exists a piecewise-$C^1$ Jordan
curve $\gamma\subset \DD$  such that $\gamma \subset U$. Let
$\Gamma$ be the bounded component of the complement of $\gamma$.
Then $W=U\cap \Gamma$ is polygonally connected and, hence
$\mathcal{F}$-connected.
\end{lemma}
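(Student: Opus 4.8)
The plan is to establish the two claims separately: the polygonal connectivity of $W=U\cap\Gamma$, and the implication that polygonal connectivity yields $\mathcal{F}$-connectivity. The second implication is the routine one, so I would settle it first. A single line segment is $\mathcal{F}$-connected (a standard fact), hence so is any polygonal arc, being a finite chain of segments joined at their endpoints; fixing a base point $w_{0}\in W$ and joining each $w\in W$ to $w_{0}$ by a polygonal arc contained in $W$ then writes $W$ as a union of $\mathcal{F}$-connected sets sharing the point $w_{0}$, so $W$ is $\mathcal{F}$-connected. Everything therefore reduces to producing, for any two points of $W$, a piecewise-linear arc joining them on which $\varphi>0$ throughout.

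Next I would isolate two structural ingredients. The first is a maximum-principle remark: if $O$ is a bounded open set with $\overline{O}\subset\Gamma$ and $\varphi=0$ on $\partial O$, then $\varphi\le 0$ on $O$ by subharmonicity, so $\varphi\equiv 0$ there. Consequently the zero set $K=\{z\in\Gamma:\varphi(z)=0\}$ cannot enclose any region in which $\varphi>0$: it produces only \emph{solid} obstacles, never an island of $W$ cut off from the rest. In particular no bounded component of $\Gamma\setminus\overline{K}$ can contain a point of $U$, and $W$ lies in the $\gamma$-reaching part of $\Gamma\setminus\overline{K}$. The second ingredient is the quantitative input from Lemma \ref{le0}: after translating a small disc $\DD(a,R)\subset\Gamma$ onto $\DD$ and applying the lemma to $\varphi-1\in\sh_{-}(\DD)$, I obtain around every $a\in W$ a full circle $C(a,\delta_{a})$ with radius $\delta_{a}$ bounded below by $\kappa R$ (with $\kappa$ depending only on $\varphi(a)$) on which $\varphi$ exceeds a fixed positive constant. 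The essential feature is that this radius does not shrink to $0$, which is what makes a covering argument possible.

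Using these, I would first show that $W$ is connected. The positive circles of Lemma \ref{le0} are connected subsets of $W$ that encircle their centres; since $\varphi>0$ at every point of the compact curve $\gamma$, such circles exist around every point of $\gamma$ and preclude $\overline{K}$ from reaching $\gamma$ so as to separate $\Gamma$: any would-be separating part of $K$ terminating at a point $p\in\gamma$ would be crossed by the positive circle about $p$, which is impossible. Hence $\overline{K}$ does not disconnect $\Gamma$, and together with the maximum-principle remark this forces all of $W$ into a single connected piece. I would then upgrade connectivity to polygonal connectivity: cover a fixed Euclidean polygonal arc from $a$ to $w_{0}$ inside $\Gamma$ by finitely many of the discs above (using the uniform lower bound on the radii and compactness of $\overline{\Gamma}$), and along it splice in local detours that skirt each solid piece of $K$ met by the straight arc, each detour built from the positive circles so that $\varphi>0$ along every inserted segment.

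The step I expect to be the main obstacle is exactly this last, local, construction. Because $\varphi$ is only upper semicontinuous, positivity of $\varphi$ on a whole circle does not thicken to positivity on any neighbourhood, so one cannot simply replace the circles by nearby polygons; choosing the intermediate vertices and segments of each detour so that $\varphi$ stays strictly positive at every point of the resulting broken line, uniformly enough that finitely many detours chain together across $\Gamma$, is where the real work lies. It is precisely here that Lemma \ref{le0} (for the uniform radius $\kappa$) and the maximum principle (to guarantee that the pieces of $K$ to be circumvented are solid obstacles rather than barriers reaching out to $\gamma$) have to be combined.
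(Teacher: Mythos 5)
Your reduction of $\mathcal{F}$-connectedness to polygonal connectedness (segments are $\mathcal{F}$-connected, chain them through a base point) is fine and matches the paper, but the structural picture on which your main argument rests is false, not just incomplete. The set $K=\{z\in\Gamma:\varphi(z)=0\}$ is only $\mathcal{F}$-closed; it need not be Euclidean-closed and can be Euclidean-dense in parts of $\Gamma$. Concretely, let $\{a_n\}$ be dense in the annulus $A=\{1/2<|z|<3/4\}$, put $u=\sum_n c_n\log|z-a_n|$ with $c_n>0$ decaying fast enough that $u>-1/2$ on $\overline{\DD(0,1/2)}$ and on $\gamma=C(0,0.85)$, and take $\varphi$ a normalization of $\max(u,-1)+1$. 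Then $K\supset\{a_n\}$ is dense in $A$, so $\overline{K}\supset\overline{A}$, and the component of $\Gamma\setminus\overline{K}$ containing $0$ is a bounded island, compactly enclosed by $\overline{K}$, consisting entirely of points of $U$. This contradicts both your claims that ``no bounded component of $\Gamma\setminus\overline{K}$ can contain a point of $U$'' (note that every point of $\Gamma\setminus\overline{K}$ lies in $U$) and that $W$ lies in the $\gamma$-reaching part of $\Gamma\setminus\overline{K}$. Your maximum-principle remark cannot rule this out: the boundary of a component of $\Gamma\setminus\overline{K}$ lies in $\overline{K}$, not in $K$, and at points of $\overline{K}\setminus K$ upper semicontinuity gives no control from above on $\limsup\varphi$ from inside the component. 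Likewise, a circle on which $\varphi>0$ is disjoint from $K$ but not from $\overline{K}$, so such circles do not prevent $\overline{K}$ from ``reaching'' $\gamma$. Consequently any strategy that routes polygonal arcs through $\Gamma\setminus\overline{K}$, treating $\overline{K}$ as a solid obstacle to be skirted, cannot succeed: in the example above, every path from $\DD(0,1/2)$ to $\gamma$ must cross $\overline{K}$, yet the lemma (correctly) asserts $W$ is polygonally connected; the connecting polygon must thread through points of $U$ arbitrarily close to zeros of $\varphi$.

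Two further problems: the uniform lower bound on the radii does not exist — the $\kappa$ of Lemma \ref{le0} depends on the lower bound $c$ for $\varphi$ at the centre, and $\inf_W\varphi=0$ in general (an usc function need not attain a positive minimum on $W$, or even on $\gamma$), so $\delta_a\to0$ as $\varphi(a)\to0$; and the step you yourself flag as ``where the real work lies'' — producing segments on which $\varphi$ stays strictly positive — is exactly the assertion of the lemma, so the proposal in effect reduces the lemma to itself. The paper's proof avoids all of this by never working with $K$: it covers $\gamma$ by small squares on whose boundaries $\varphi$ is bounded below, extracts from a minimal finite subcover a polygonal curve $\gamma_n\subset\Gamma$, and studies the compact superlevel sets $K^n_\eps=\{\varphi\ge\eps\}\cap\bar\Gamma_n$, which are genuinely closed because $\varphi$ is usc. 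The maximum principle shows $K^n_\eps$ is connected (its ``outer boundary'' $\gamma_n$ lies in it), and connectedness is upgraded to polygonal connectedness inside $K^n_{\eps/2}$ using the fact that $\lim_{r\to0}\varphi(z+re^{i\theta})=\varphi(z)$ for a.e.\ $\theta$; letting $\eps\downarrow0$ and $n\to\infty$ then yields the statement. Working with $\{\varphi\ge\eps\}$ instead of $\{\varphi=0\}$ is the key idea your proposal is missing.
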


\begin{proof} We follow Fuglede's ideas in \cite{Fu75}, section 5.
A square shall be an open square with sides parallel to the
coordinate axes. The square centered at $z$ with diameter $d$ will
be denoted by $Q(z,d)$, its boundary by $S(z,d)$.

Let $n \geq 1$ be a natural number. For every $z\in\gamma$ there
exists $0<d_z< 1/n$ such that $\varphi>\varphi(z)/2$ on
$S(z,d_z)\subset \DD$. This may be proved similarly as the
corresponding well-known statement for circles, cf.~\cite{H69},
Theorem 10.14. The squares $Q(z,d_z)$ cover $\gamma$. By
compactness we can select a finite subcover $\{Q(z_j,d_j),
j=1\dots m_n\}$, that is minimal in the sense that no square can
be removed without loosing the covering property. Now
$\Omega_n=\cup_{j=1}^{m_n} Q(z_j,d_{j})$ is an open neighborhood
of $\gamma$, the boundary of which is contained in
 $$\{\varphi> \frac12\min_{1\le j\le m_n} \{\varphi(z_j)\}\}.$$
Since $\gamma $ is locally connected, the boundary of $\Omega_n$
will consist of two polygonal curves if $n$ is sufficiently large.
One of these components, say, $\gamma_n$ is contained in $\Gamma$.
Denote by $\Gamma_n$ the bounded component of the complement of
$\gamma_n$.

Let $0<\eps<\frac12\min_{1\le j\le m_n} \{\varphi(z_j)\}$ and
$K^n_\eps=\{\varphi\ge\eps\}\cap \bar \Gamma_n$. Then $K^n_\eps$
is a compact subset of $\DD$. Since $\partial\Gamma_n$(=
$\gamma_n$) is contained in $K^n_\eps$, an easy application of the
maximum principle shows that $K^n_\eps$ is connected. Let $z_1,
z_2$ be points in $K^n_\eps$. Repeating the above argument, we
find for every $\delta>0$ a polygonal curve $C$ contained in
$K^n_{\eps/2}$, such that $d(z_1,C), d(z_2,C)<\delta$. A well
known result, cf.~\cite{Ra} states that for $z\in \DD$ and almost
all $\theta\in[0,2\pi]$
$$\lim_{r\to0}\varphi(z+re^{i\theta})=\varphi(z).$$
The conclusion is that there exists a polygonal line in
$K^n_{\eps/2}$ that connects $z_1$ with $z_2$. Letting $\eps\to 0$
we conclude that $U \cap \bar \Gamma_n =\cup_{\eps>0} K^n_\eps$ is
polygonally connected. Since every interval is
$\mathcal{F}$-connected, cf. \cite{Fu71} , so is $U \cap \bar
\Gamma_n$. Finally, since $W= \bigcup_{n\geq 1}U \cap \bar
\Gamma_n$, we conclude that $W$ is $\mathcal{F}$-connected.
\end{proof}

As an easy consequence of Lemma \ref{le2} we have the following

\begin{corollary}\label{co2} The fine topology on $\CC$ is locally connected.

\end{corollary}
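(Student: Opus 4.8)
The plan is to deduce local connectedness from Theorem \ref{th6} together with Lemmas \ref{le0} and \ref{le2}. Recall that a topology is locally connected precisely when every point has, inside each of its neighborhoods, an $\mathcal{F}$-connected $\mathcal{F}$-neighborhood. Since the sets $\{w\in\DD(z,r):\varphi(w)>c\}$ with $\varphi\in\sh(\DD(z,r))$ form a base of the fine topology (Theorem \ref{th6} with $n=1$), it suffices to exhibit, inside any such basic $\mathcal{F}$-neighborhood of a given point, a smaller $\mathcal{F}$-connected $\mathcal{F}$-neighborhood. After a translation and dilation I may put the point at $0$ and take the basic neighborhood to be $\{\varphi>c_0\}$ with $\varphi\in\sh(\DD)$ and $\varphi(0)>c_0$; restricting to a slightly smaller disk, rescaling to $\DD$, and subtracting a constant, I may further assume that $\varphi\in\sh_-(\DD)$. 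The key preparatory move is then to insert two intermediate levels, choosing constants $c_0<d<c<0$ with $\varphi(0)>c$.

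Next I would apply Lemma \ref{le0} with these values of $d$ and $c$ at the point $a=0\in V$: it produces a circle $C(0,\delta)$, with $\kappa<\delta\le 1/2$, contained in $\Omega=\{z\in\DD:\varphi(z)\ge d\}$. Because $d>c_0$, one has $\{\varphi\ge d\}\subset\{\varphi>c_0\}$, so this circle already lies inside the target set $\{\varphi>c_0\}$ and surrounds $0$. To bring in Lemma \ref{le2}, set $\psi=\tfrac{1}{-c_0}\max(\varphi-c_0,0)$, which is subharmonic with $0\le\psi\le 1$ on $\DD$ and positivity set $U=\{\psi>0\}=\{\varphi>c_0\}$; on the circle $\varphi\ge d>c_0$, so $\gamma:=C(0,\delta)\subset U$ is a $C^1$ Jordan curve whose bounded complementary component is $\Gamma=\DD(0,\delta)$. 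Lemma \ref{le2} now yields that
$$
W=U\cap\Gamma=\{z\in\DD(0,\delta):\varphi(z)>c_0\}
$$
is polygonally connected, hence $\mathcal{F}$-connected.

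Finally I would observe that $W$ is an $\mathcal{F}$-open set (an intersection of the $\mathcal{F}$-open set $\{\varphi>c_0\}$ with the Euclidean-open disk $\DD(0,\delta)$), that $0\in W$ since $\varphi(0)>c>d>c_0$ and $\delta>0$, and that $W\subset\{\varphi>c_0\}$, the original basic neighborhood. Thus $W$ is an $\mathcal{F}$-connected $\mathcal{F}$-neighborhood of $0$ inside the given neighborhood, and transporting back through the affine normalization (which is an $\mathcal{F}$-homeomorphism) gives the required neighborhood at the original point. I expect the only genuinely delicate point to be the level bookkeeping at the interface of the two lemmas: Lemma \ref{le0} delivers the circle in the \emph{closed} superlevel set $\{\varphi\ge d\}$, whereas Lemma \ref{le2} demands a curve inside the \emph{open} positivity set, and it is precisely the insertion of the strict intermediate level $c_0<d$ that reconciles the two; the remaining normalizations (passing to $\sh_-(\DD)$, truncating to obtain $0\le\psi\le 1$, and rescaling the disk so that $\Gamma\subset\DD$) are routine.
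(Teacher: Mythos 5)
Your proof is correct and follows essentially the same route as the paper's: reduce via Theorem \ref{th6} to a basic $\mathcal{F}$-neighborhood, use Lemma \ref{le0} to produce a circle about the point lying inside the set, and apply Lemma \ref{le2} to conclude that the intersection with the enclosed disk is an $\mathcal{F}$-connected $\mathcal{F}$-neighborhood. Your explicit level bookkeeping ($c_0<d<c<0$) and the truncation $\psi=\tfrac{1}{-c_0}\max(\varphi-c_0,0)$ merely spell out the normalizations that the paper compresses into its ``without loss of generality'' step.
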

\begin{proof} Let $z \in \CC$ and let $U \subseteq \CC$ be an $\mathcal{F}$-neighborhood
of $z$. By Theorem \ref{th6} there exists an $\mathcal{F}$-open
$\mathcal{F}$-neighborhood $V=\Omega_{\DD(z,r),\phi,c}\subseteq U$
of $z$. Without loss of generality we may assume that $$ V =
\DD(z,1)\cap \{\varphi > 0\},
$$
as noted before $V$ contains arbitrarily small circles about $z$.
Let $\partial \DD(z, r)$ be one of them. Then by Lemma \ref{le2},
$V \cap \DD(z,r)$ is an $\mathcal{F}$-neighborhood of $z$ which is
an $\mathcal{F}$-domain.
\end{proof}

\begin{remark} Besides the elementary proof that we presented here there are at
least three proofs of this corollary. The first one was found by
Fuglede \cite{Fu71}, who gave a second proof in \cite{Fu72}, page
92. Fuglede \cite{Fu05} observed, furthermore, that since our
proof of the local connectedness in \cite{SJ} does not use the
fact that the fine topology on $\CC =\RR^{2}$ is locally
connected, it provides of course (for $n=1$) a third proof of that
fact.
\end{remark}

\section{Structure of $\mathcal{F}$-Open Sets}
We start this Section with the technical result that was alluded
to in the introduction.
\begin{prop}\label{pr2} Let $U \subseteq \CC^{n}$ be an $\mathcal{F}$-open subset and let $a \in U $.
Then there exists a constant $\kappa = \kappa(U, a)$ and an
$\mathcal{F}$-neighborhood $V\subset U$ of $a$ with the property
that for any complex line $L$ through $v\in V$ the
$\mathcal{F}$-component of the $\mathcal{F}$-open set $U \cap L$
that contains $v$, contains a circle about $v$ with radius at
least $\kappa$.
\end{prop}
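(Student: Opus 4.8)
The plan is to reduce the multidimensional statement to the one-dimensional estimate already established in Lemma \ref{le0}. By Theorem \ref{th6}, the point $a$ has an $\mathcal{F}$-neighborhood of the form $\Omega_{B(a,r),\psi,c}=\{w\in B(a,r):\psi(w)>c\}\subset U$ with $\psi\in\psh(B(a,r))$. After translating $a$ to the origin and rescaling, I would arrange that $B(a,r)$ becomes the unit ball, that $\psi$ is normalized to take values in a convenient range, and fix constants $d<c<0$ as in Lemma \ref{le0}. The set $V$ will be taken to be a suitably shrunk version of $\Omega_{B(a,r),\psi,c}$, say $V=\{w\in B(0,\rho):\psi(w)>c\}$ for a small radius $\rho$ playing the role of the $\DD(0,1/8)$ appearing in Lemma \ref{le0}.

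The key step is to run Lemma \ref{le0} slice by slice. Fix $v\in V$ and a complex line $L$ through $v$; parametrize $L\cap B(0,1)$ by a disc via an affine map $t\mapsto v+t\xi$ with $\xi$ a unit direction, and consider $\varphi(t)=\psi(v+t\xi)$, which is subharmonic in $t$ on the corresponding disc and inherits the bounds $\varphi>c$ near $t=0$. The restriction of a plurisubharmonic function to a complex line is subharmonic, so the hypotheses of Lemma \ref{le0} apply uniformly: there is a $\kappa>0$, \emph{depending only on $c,d$} and hence only on $(U,a)$, such that $\{t:\varphi(t)\ge d\}$ contains a circle $C(0,\delta)$ with $\delta>\kappa$. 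Transporting this circle back by the parametrization yields a circle about $v$ in $L$ of radius at least $\kappa$ (up to the fixed scaling factor), lying inside $\{\psi\ge d\}\cap L\subset U\cap L$. Since a circle is $\mathcal{F}$-connected (indeed finely connected in $L$) and passes through $v$, it lies in the $\mathcal{F}$-component $C_L$ of $v$ in $U\cap L$, which is exactly the desired conclusion.

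The main obstacle, and the point requiring genuine care, is the \emph{uniformity} of $\kappa$ over all lines $L$ and all points $v\in V$ simultaneously. Lemma \ref{le0} delivers a $\kappa$ that is uniform in the function $\varphi$ and the point $a\in V$ for a fixed pair $d<c<0$; the work here is to check that when I slice a single fixed $\psi$ along every line through every $v\in V$, the resulting family of one-dimensional subharmonic functions all satisfy the Lemma's hypotheses with the \emph{same} constants $c$ and $d$. This forces the normalization of $\psi$ (bounding it above, and ensuring $\psi(v)>c$ on $V$) to be done once and for all on the ball before slicing, and it requires choosing $\rho$ small enough that the relevant discs $L\cap B(0,1)$ about $v$ are large enough for the geometry of Lemma \ref{le0} (the disc $\DD$ versus the inner disc $\DD(0,1/8)$) to be respected after reparametrization. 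Once these scale-matching bookkeeping issues are settled, the constant $\kappa$ extracted from Lemma \ref{le0} serves for the whole family, and the proposition follows.
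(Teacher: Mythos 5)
Your reduction to Lemma \ref{le0} follows the paper's route, and the issue you flag as the main obstacle (uniformity of $\kappa$ over all $v$ and $L$) is in fact the part that comes for free: Lemma \ref{le0} is formulated precisely so that $\kappa$ depends only on the fixed pair $d<c<0$ and not on the subharmonic function, so slicing one normalized $\psi$ along every line through every $v\in V$ causes no difficulty. The genuine gap is in your final sentence. Lemma \ref{le0} produces a circle $C(v,\delta)$ \emph{centered} at $v$ --- that is what ``a circle about $v$'' means both in the lemma and in the Proposition --- so the circle does \emph{not} pass through $v$. Consequently, knowing that the circle is finely connected and contained in $U\cap L$ only places it in \emph{some} $\mathcal{F}$-component of $U\cap L$; it does not place it in the component $C_L$ containing $v$. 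A priori the region between $v$ and the circle could be riddled by $\{\psi\le d\}$ in such a way that $v$ and the circle lie in different $\mathcal{F}$-components, and ruling this out is precisely the nontrivial content of the Proposition. Nothing in your argument addresses it.

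This is exactly where the paper invokes Lemma \ref{le2}: the circle is a Jordan curve lying in a finely open superlevel set $\{\varphi_L>c'\}\subset U\cap L$ (for the constant $c'<d$ coming from the basis set of Theorem \ref{th6}), and Lemma \ref{le2} then shows that the part of that superlevel set inside the disc bounded by the circle is \emph{polygonally} connected, hence $\mathcal{F}$-connected. That set contains $v$ and has the circle in its fine closure (an open disc is non-thin at each of its boundary points), so $v$ and the circle do lie in a single $\mathcal{F}$-component of $U\cap L$. Your proof needs this step, or some substitute for it, to be complete; as written, the conclusion rests on the false claim that the circle passes through $v$.
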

\begin{proof} Let $a \in U$. By Theorem \ref{th6} there
exist two constants $r>0$, $d<0$ and a plurisubharmonic function
$\varphi \in \psh(B(a, r))$ such that $\Omega = \{z\in B(a, r)
:\varphi (z)\geq d\}$ is an $\mathcal{F}$-neighborhood of $a$
contained in $U$. Since the pluri-fine topology is
biholomorphically invariant, there is no loss of generality if we
assume that $r=2$, $\varphi \leq0$, $a=0$ and $\varphi (0) = d/2
$. Let
$$
V=\{z\in B(0, 1/8): \ \varphi (z) > d/2\}.
$$
Let $v \in V$ and let $L$ be a complex line through $v$. The
restriction $\varphi_L$ of $\varphi$ to $B(v, 1)\cap L$ is
subharmonic and satisfies the conditions of Lemma \ref{le0}.
Consequently, there exists a constant $\kappa$ depending only on
$d$, but not on $\varphi_L$, such that the set $\{z\in B(v, 1): \
\varphi (z) \geq d\}\cap L$, and therefore $U \cap L$, contains a
circle with radius $\delta_{\varphi_L, v} \in(\kappa,\frac{1}{2})$
about $v$. It follows from Lemma \ref{le2} that the set $U \cap L
\cap B(v, \delta_L)$ is $\mathcal{F}$-connected. This completes
the proof of the proposition.
\end{proof}

\begin{proof}[Proof of Theorem \ref{th2}] Let $V \subseteq \Omega$
be an $\mathcal{F}$-neighborhood of $z$ provided by Theorem
\ref{th6}.
 Without loss of generality we may assume
 $$V=B(z,1)\cap \{\varphi > 0\},$$
for some $\varphi \in \psh(B(z,1))$. Recall that for a complex
line $L$ through $z$, $C_L$ is the $\mathcal{F}$-component of
$\Omega\cap L$ that contains $z$. It is immediate that $\cup_{z\in
L}C_L$ is $\mathcal{F}$-connected. We denote by $\tilde{C_L}$ the
$\mathcal{F}$-component of $V\cap L$ that contains $z$. By Lemma
\ref{le0} together with Lemma \ref{le2} we can find a constant
$\kappa >0$ such that $V \cap B(z,\kappa)\cap L \subseteq
\tilde{C_L}$. As $\tilde{C_L}$ is clearly contained in $C_L$, $V
\cap B(z,\kappa)$ is a subset of $\cup_{L\ni z} C_L$. This proves
that $\cup_{L\ni z} C_L$ is an $\mathcal{F}$-neighborhood of $z$.
\end{proof}

The next gluing lemma was used in \cite{SJ}. The lemma is actually
an immediate consequence of Fuglede's results, but it seemed
interesting to find a proof that avoids the use of the fine
potential theory machinery. Here we give a short direct proof.

\begin{lemma}\label{le5} Let $v\in \sh(D)$ for some domain $D\subset\CC$.
Suppose that $v\ge 0$ and that there exist nonempty, disjoint
$\mathcal{F}$-open sets $D_1,D_2\subset D$ such that
$$\{v>0\}=D_1\cup D_2.$$
Then the function $v_1$ defined by
\begin{equation} v_1(z)=
    \begin{cases}
        0 & \text{if $z\in D\setminus D_1$,}\cr
        v(z)&\text{if $z\in D_1$,}
    \end{cases}
\end{equation}
is subharmonic in $D$.
\end{lemma}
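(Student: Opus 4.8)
The plan is to verify directly that $v_1$ is upper semicontinuous (usc) and satisfies the sub-mean value inequality, the point being that both reduce to a single \emph{separation} statement proved with the machinery already at hand. First I would record the elementary facts: $0\le v_1\le v$, and $v=v_1+v_2$ where $v_2=v\cdot\chi_{D_2}$ (extended by $0$), so that $D_1$ and $D_2$ play symmetric roles. Since $v$ is subharmonic it is finely continuous, hence $\{v>0\}$ is $\mathcal F$-open; as $D_1,D_2$ are disjoint $\mathcal F$-open sets with union $\{v>0\}$, each is $\mathcal F$-clopen in $\{v>0\}$, so any $\mathcal F$-connected subset of $\{v>0\}$ lies entirely in $D_1$ or in $D_2$. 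This last observation is what couples Lemmas \ref{le0} and \ref{le2} to the problem.

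The heart of the argument is the following claim: for every $\varepsilon>0$ and every $z_0\in D_2$, the set $D_1\cap\{v\ge\varepsilon\}$ does not accumulate at $z_0$ in the Euclidean sense (and symmetrically with $D_1,D_2$ interchanged). To prove it I would argue by contradiction: suppose $z_n\in D_1$, $v(z_n)\ge\varepsilon$ and $z_n\to z_0$. Working on a fixed small ball about $z_0$ and normalizing $v$ (subtracting its supremum to land in $\sh_-$), I would apply Lemma \ref{le0} to obtain a constant $\kappa>0$, \emph{independent of $n$}, together with circles $C(z_n,\delta_n)$, $\delta_n>\kappa$, lying in a superlevel set $\{v\ge t_-\}$ for some fixed $t_-<\min(\varepsilon,v(z_0))$. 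By Lemma \ref{le2} the set $W_n=\{v>t_-\}\cap \DD(z_n,\delta_n)$ is $\mathcal F$-connected; since it contains $z_n\in D_1$, it must lie in $D_1$. But the uniform bound $\delta_n>\kappa$ forces $z_0\in \DD(z_n,\delta_n)$ for all large $n$, and $v(z_0)>t_-$ then gives $z_0\in W_n\subseteq D_1$, contradicting $z_0\in D_2$.

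Granting the separation claim, the rest is bookkeeping. Upper semicontinuity of $v_1$ is immediate at points where $v_1$ locally coincides with $v$ or with $0$; the only genuine case is $z_0\in D_2$, where the claim gives $\limsup_{z\to z_0}v_1(z)=\limsup_{D_1\ni z\to z_0}v(z)\le 0=v_1(z_0)$. For the sub-mean value inequality the only nontrivial case is $z_0\in D_1$ (elsewhere $v_1\ge 0=v_1(z_0)$ suffices): using the symmetric separation I would show $\frac1{2\pi}\int_{D_2\cap C(z_0,r)}v\,d\theta\to 0$ as $r\to 0$, while $\frac1{2\pi}\int_0^{2\pi}v(z_0+re^{i\theta})\,d\theta\to v(z_0)$, so that the circle averages of $v_1=v-v_2$ tend to $v(z_0)=v_1(z_0)$. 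Together with upper semicontinuity, this sub-mean inequality along a sequence of radii yields that $v_1$ is subharmonic.

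I expect the separation claim of the second paragraph to be the main obstacle, since it is exactly where the purely $\mathcal F$-topological splitting $\{v>0\}=D_1\cup D_2$ must be converted into Euclidean information. The decisive feature is the \emph{uniform} lower bound $\kappa$ on the radii furnished by Lemma \ref{le0}: it keeps the circles around $z_n$ from shrinking as $z_n\to z_0$, allowing one $\mathcal F$-connected region to engulf $z_0$. The remaining care is bureaucratic — matching the sign and normalization conventions needed to invoke Lemma \ref{le0} for a nonnegative $v$, and passing between the closed level set produced there and the open one required by Lemma \ref{le2}.
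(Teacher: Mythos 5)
Your separation claim and its proof are sound, and they are in essence the paper's own key step: the paper phrases it as the statement that $D_1(\eps)=D_1\cap\{v\ge\eps\}$ is closed in $D$, proved by exactly the same combination of Lemma \ref{le0}, Lemma \ref{le2}, and the observation that an $\mathcal{F}$-connected subset of $\{v>0\}$ cannot meet both $D_1$ and $D_2$ (the paper centers its circle at the limit point $z_0$ rather than at the approximating points, so it does not even need the uniformity of $\kappa$ that you invoke). The upper semicontinuity step is also fine. The genuine gap is in the final step, the sub-mean value inequality at $z_0\in D_1$. What your argument establishes is that $\overline{v_1}(z_0,r)\to v_1(z_0)$ as $r\to0$; but convergence of circle averages to the central value is strictly weaker than the inequality $v_1(z_0)\le\overline{v_1}(z_0,r)$, and upper semicontinuity together with such convergence does not imply subharmonicity. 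Indeed, $u(z)=-\lvert\re z\rvert$ is continuous and its circle averages converge to $u(z_0)$ at every point, yet it is not subharmonic: at $z_0=0$ the average over \emph{every} circle is strictly below $u(0)$. Your estimate has exactly this defect: the separation claim gives, for $r\le r_\eps$, only $\overline{v_1}(z_0,r)=\overline{v}(z_0,r)-\overline{v_2}(z_0,r)\ge v(z_0)-\eps$, i.e.\ $\overline{v_1}(z_0,r)\ge v_1(z_0)-o(1)$, and nothing forces the error $\overline{v_2}(z_0,r)$ to be dominated by the surplus $\overline{v}(z_0,r)-v(z_0)$, which may itself be arbitrarily small.

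The missing idea is the truncation the paper uses. For fixed $\eps>0$ set $v_\eps=\max(v_1,\eps)$, that is, $v_\eps=v$ on $D_1(\eps)$ and $v_\eps=\eps$ elsewhere. Your separation claim (equivalently, closedness of $D_2(\eps)$) shows that a small circle $C(a,r)$ around $a\in D_1(\eps)$ avoids $D_2(\eps)$; every point of such a circle is then either in $D_1(\eps)$, where $v_\eps=v$, or satisfies $v<\eps=v_\eps$, so $v\le v_\eps$ pointwise on $C(a,r)$. Hence $v_\eps(a)=v(a)\le\overline{v}(a,r)\le\overline{v_\eps}(a,r)$ --- an honest inequality with no $o(1)$ loss, precisely because the truncation makes $v_\eps$ dominate $v$ wherever they disagree. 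Since $v_\eps$ is upper semicontinuous and trivially satisfies the sub-mean inequality off $D_1(\eps)$, it is subharmonic; and $v_{1/n}\downarrow v_1$, so $v_1$ is subharmonic as a decreasing limit. This is the paper's route; your direct verification for $v_1$ itself cannot be completed without some such device.
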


\begin{proof}
For $\eps>0$ let $ D_i(\eps)=D_i\cap \{v\ge\eps\}$, ($i=1,2$). We
claim that $D_1(\eps)$ is closed in $D$. Indeed, take a sequence
$\{x_n\}$ in $D_1(\eps)$ that converges to $y \in D$. Since $ \{v
\geq \eps\}$ is closed in $D$, $v(y)\geq \eps$. Thus $y \in D_1
\cup D_2$. Suppose that $y \in D_2$. Again there exists $r>0$ such
that $C(y,r)$ is contained in the $\mathcal{F}$-open set $\{v >
\eps /2\}$. By Lemma \ref{le2} the set
$$U=\DD(y,r)\cap \{v > \eps /2\} $$ is an $\mathcal{F}$-connected subset of $D_1\cup D_2$.
Since $U \cap D_2$ is non-empty, $U \cap D_1 = \emptyset$. This
contradicts the fact that $U$ contains $x_n$ for $n$ sufficiently
large. Thus $y\in D_1$ and hence $D_1(\eps)$, which proves the
claim.

Now define
\begin{equation} v_\eps(z)=
    \begin{cases}
        \eps & \text{if $z\in D\setminus D_1(\eps)$,}\cr
        v(z)&\text{if $z\in D_1(\eps)$.}
    \end{cases}
\end{equation}

The function $v_\eps$ is clearly upper semicontinuous in $D$ and
it satisfies the mean value inequality in $D\setminus D_1(\eps)$.
Let $a\in D_1(\eps)$ and denote by $\overline{v}(a, r)$ the mean
value of $v$ over the circle $C(a,r)$. Since $D_2(\eps)$ is
similarly closed, $v\leq v_\eps$ on $C(a,r)$ for sufficiently
small $r$. Consequently,
$$v_\eps(a)=v(a)\le \overline{v}(a, r)\le\overline{v_\eps}(a, r).$$
This proves that $v_\eps$ is subharmonic in $D$. Finally, the
sequence $\{v_{1/n}\}_n$ decreases to $v_1$, showing that $v_1$ is
subharmonic.
\end{proof}

It was proved by Gamelin and Lyons in \cite{GL83} that an
$\mathcal{F}$-open subset of $\mathbb{C}$ is
$\mathcal{F}$-connected if and only if it is connected with
respect to the usual topology on $\mathbb{C}$. The next example
shows that this result has no analog in $\mathbb{C}^n$ for $n>1$.
\begin{example} There exists an $\mathcal{F}$-open set $U \subset
\mathbb{C}^{2}$, which is connected but not
$\mathcal{F}$-connected. Indeed, consider the set $$\Gamma = \{(x,
y)\in \CC^2: \ y= e^{1/x}, -1\leq x < 0\}.$$ As was proved by the
second author, cf. \cite{W}, one can find a plurisubharmonic
function $\varphi \in \text{PSH}(B(0, 2))$ such that $\varphi
|_\Gamma = -\infty $ and $\varphi (0)= 0$. Let $V= \{ \varphi > -
1/2\}$ and $W=\{\varphi <-1/2\}$. Let $W_1$ be the connected
component of $W$ that contains $\Gamma$, and let $V_1$ the
$\mathcal{F}$-component of $V$ that contains $0$. Since the
pluri-fine topology is locally connected, $V_1$ is
$\mathcal{F}$-open. Of course $V_1$ is connected since it is
already $\mathcal{F}$-connected. Observe now that $U=V_1 \cup W_1$
is $\mathcal{F}$-open and $\mathcal{F}$-disconnected. On the other
hand, since $W_1 \cup \{0\}$ is clearly connected, the
$\mathcal{F}$-open set $U=V_1 \cup W_1 = V_1 \cup (W_1 \cup
\{0\})$ is connected.
\end{example}
\section{Finely Plurisubharmonic Functions}\label{final}

As far as we know there is no generally accepted definition of
$\mathcal{F}$-pluri\-subharmonic functions. The following,
cf.~\cite{Mo03}, seems quite natural.
\begin{definition}\label{defn1}
A function $f$ :\ $\Omega $ $\longrightarrow$ $[-\infty, \infty [$
($\Omega $ is $\mathcal{F}$-open in $\CC^n$) is called {\em
$\mathcal{F}$-plurisubharmonic} if $f$ is $\mathcal{F}$-upper
semicontinuous on $\Omega$ and if the restriction of $f$ to any
complex line $L$ is finely subharmonic or $\equiv -\infty$ on any
$\mathcal{F}$-component of $\Omega\cap L$.
\end{definition}
It follows immediately from this definition and general properties
of finely subharmonic functions that any usual plurisubharmonic
function is $\mathcal{F}$-pluri\-subharmonic where it is defined.
Moreover, $\mathcal{F}$-plurisubharmonic functions in an
$\mathcal{F}$-open set $\Omega$ form a convex cone, which is
stable under pointwise infimum for lower directed families, under
pointwise supremum for finite families, and closed under
pluri-finely locally uniform convergence. See \cite{Fu72}, page
84-85, and \cite{Mo03}.

Clearly, an $\mathcal{F}$-plurisubharmonic function $f$ on an
$\mathcal{F}$-open set $\Omega$ has an
$\mathcal{F}$-pluri\-subharmonic restriction to every
$\mathcal{F}$-open subset of $\Omega$. Conversely, suppose that
$f$ is $\mathcal{F}$-plurisubharmonic in some
$\mathcal{F}$-neighborhood of each point of $\Omega$. Then $f$ is
$\mathcal{F}$-pluri\-subharmonic in $\Omega$, see.~ \cite{Fu72},
page 70. We shall refer to this by saying that the
$\mathcal{F}$-plurisubharmonic functions have the {\em sheaf
property}.

\begin{theorem}\label{th6a}
Let $f$ be an $\mathcal{F}$-plurisubharmonic function on a
$\mathcal{F}$-domain $\Omega$. If $f=-\infty$ on an
$\mathcal{F}$-open subset $U$ of $\Omega$, then $f\equiv-\infty$.
\end{theorem}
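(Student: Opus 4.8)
The plan is to reduce the several-variable statement to the one-dimensional fine potential theory by exploiting the line-restriction structure built into Definition~\ref{defn1} and the connectedness machinery established earlier. Let $Z=\{z\in\Omega: f\equiv-\infty \text{ on some } \mathcal{F}\text{-neighborhood of } z\}$; this is by construction $\mathcal{F}$-open, and it is nonempty since it contains $U$. Since $\Omega$ is $\mathcal{F}$-connected, it suffices to show $Z$ is also $\mathcal{F}$-closed in $\Omega$, for then $Z=\Omega$ and $f\equiv-\infty$. Thus the whole argument hinges on proving that the $\mathcal{F}$-boundary of $Z$ relative to $\Omega$ is empty.

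Suppose for contradiction that $a\in\Omega$ lies in the $\mathcal{F}$-closure of $Z$ but not in $Z$. First I would invoke Theorem~\ref{th2} (or more precisely Proposition~\ref{pr2}) to obtain an $\mathcal{F}$-connected $\mathcal{F}$-neighborhood $\cup_{L\ni a}C_L$ of $a$, together with the quantitative fact that for every complex line $L$ through $a$ the component $C_L$ contains a genuine Euclidean circle $C(a,\delta_L)$ with $\delta_L>\kappa$ for a uniform $\kappa=\kappa(\Omega,a)>0$. Because $a$ is in the $\mathcal{F}$-closure of $Z$, every $\mathcal{F}$-neighborhood of $a$ meets $Z$, so $f\equiv-\infty$ on some nonempty $\mathcal{F}$-open piece sitting arbitrarily $\mathcal{F}$-near $a$. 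The key step is to show this forces $f\equiv-\infty$ on all of the circles $C(a,\delta_L)$, and hence $f(a)=-\infty$ with $f\equiv-\infty$ near $a$, contradicting $a\notin Z$.

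The mechanism for propagating $-\infty$ is the one-variable result quoted as the analogue of \cite{Fu72}, Theorem~12.9: on a finely connected set, a finely subharmonic function that is $-\infty$ on a nonempty $\mathcal{F}$-open subset is identically $-\infty$. Restricting $f$ to a line $L$ through $a$, the restriction $f_L$ is finely subharmonic (or $\equiv-\infty$) on the $\mathcal{F}$-component $C_L$, which is $\mathcal{F}$-connected. The delicate part is to guarantee that $Z\cap L$ actually contains a nonempty $\mathcal{F}$-open subset of $C_L$ for enough lines $L$; this is where I would use that $Z$ is $\mathcal{F}$-open together with the neighborhood structure, choosing the lines so that the $-\infty$ set meets $C_L$ in a set with nonempty fine interior. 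Once $f_L\equiv-\infty$ on $C_L$ for all $L\ni a$, the point $a$ and its circles are swallowed, giving $f\equiv-\infty$ on the $\mathcal{F}$-neighborhood $\cup_L C_L$ of $a$, so $a\in Z$ after all.

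The main obstacle I anticipate is precisely the compatibility between the $\mathcal{F}$-openness of $Z$ in $\CC^n$ and the fine-openness of the trace $Z\cap L$ inside a single line $L$: a point $a$ in the $\mathcal{F}$-closure of $Z$ need not have the $-\infty$ set meeting every line $L$ through $a$ in a finely open set, so I cannot naively apply the one-dimensional propagation line-by-line through $a$ itself. The way around this is to propagate through an auxiliary point: pick $b\in Z$ close to $a$ on whose $\mathcal{F}$-neighborhood $f\equiv-\infty$, connect $b$ to the relevant circle $C(a,\delta_L)$ inside an $\mathcal{F}$-connected set furnished by Lemma~\ref{le2} applied to a circle guaranteed by Lemma~\ref{le0}, and then run the one-variable $-\infty$-propagation along that finely connected set. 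Making this chaining precise, so that the uniform radius $\kappa$ lets the circles around $a$ be reached from inside $Z$, is the technical heart of the proof; the rest is bookkeeping with the sheaf property and the definition of $\mathcal{F}$-plurisubharmonicity.
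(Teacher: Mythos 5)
Your overall skeleton is the same as the paper's: let $Z$ be the $\mathcal{F}$-open set of points near which $f\equiv-\infty$, take an $\mathcal{F}$-boundary point $a$ of $Z$ inside $\Omega$, and use the uniform radius $\kappa$ coming from Lemma \ref{le0} (packaged in Proposition \ref{pr2}) together with the one-variable propagation theorem (\cite{Fu72}, Theorem 12.9) to force $a\in Z$. You also correctly identify the obstacle that kills the naive argument: a given line $L$ through $a$ need not meet $Z$ at all, so $Z\cap C_L$ may have empty fine interior and the one-variable theorem cannot be applied on the components $C_L$ of $a$.

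However, the repair you sketch does not work as stated, and it is exactly the step you defer as ``the technical heart''. You propose to keep the circles $C(a,\delta_L)$ centered at $a$ and to connect the auxiliary point $b\in Z$ to them inside an $\mathcal{F}$-connected set furnished by Lemma \ref{le2}. But Lemma \ref{le2} and Theorem 12.9 of \cite{Fu72} are one-variable statements, so any such connection must take place inside a single complex line containing both $b$ and $C(a,\delta_L)$; this forces $b\in L$, which happens for only one line through $a$. Propagating along that single line $L_0$ gives $f\equiv-\infty$ on $C_{L_0}$ only, and $C_{L_0}$ meets every other line through $a$ in the single point $a$, a set with empty fine interior, so the propagation cannot be restarted on the other lines. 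The correct move --- and it is what the paper does --- is to re-center the whole construction at $b$ rather than $a$: fix a sub-neighborhood of $a$ (the paper's $V_{1/2}$) on which the hypotheses of Lemma \ref{le0} hold uniformly with the same $\kappa$, choose $b\in Z\cap V_{1/2}$ with $|a-b|<\kappa$ (possible since $a$ lies in the $\mathcal{F}$-closure of $Z$), and consider all lines $L'$ through $b$. Every such line meets $Z$ in a finely open neighborhood of $b$, so Theorem 12.9 gives $f\equiv-\infty$ on each component $C_{b,L'}$; then Lemmas \ref{le0} and \ref{le2} show that $\bigcup_{L'\ni b}C_{b,L'}$ contains the $\mathcal{F}$-open set $V\cap B(b,\kappa)$, which contains $a$. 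Hence $f\equiv-\infty$ on an $\mathcal{F}$-neighborhood of $a$, i.e.\ $a\in Z$, the desired contradiction. In short: you have all the right ingredients, but the swallowing must be done by the $b$-centered components engulfing $a$, not by trying to reach $a$-centered circles from $b$.
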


\begin{proof} Without loss of generality we can assume that $U$ is
the $\mathcal{F}$-interior of the set $\{f=-\infty\}$. Let $z_0
\in \Omega$ be an $\mathcal{F}$- boundary point of $U$. After
scaling we can assume that $z_0=0$ and that
\begin{equation}
V=B(0,1)\cap\{\varphi>0\}\subset\Omega.
\end{equation} is an $\mathcal{F}$-neighborhood of
$0$ defined by a PSH-function $\varphi$ on $B(0,1)$ with
$\varphi(0)=1$. Then \begin{equation}
     V_{1/2}=B(0,1/2)\cap\{\varphi>1/2\}
     \end{equation}
is a smaller $\mathcal{F}$-neighborhood of $0$. For every $z\in
V_{1/2}\cap U$ the function $\varphi$ is defined on $B(z,1/2)$ and
$B(z,1/2)\cap \{\varphi>0\}$ is an $\mathcal{F}$-neighborhood of
$z$ contained in $\Omega$.

By Lemma \ref{le0} together with Lemma \ref{le2} there exists
$\kappa>0$ such that for every line $L$ passing through $z\in
V_{1/2}\cap U$ there exists $\delta_{z,L}\in(\kappa,1/2)$ such
that $C_{z,L}=\{\varphi>0\}\cap B(z,\delta_{z,L})\cap L$ is an
$\mathcal{F}$-connected $\mathcal{F}$-neighborhood of $z$ in
$L\cap V$. Because $z\in U$, $C_{z,L}$ meets $U$ in an
$\mathcal{F}$-open subset of $L$. Therefore $f\equiv-\infty$ on
$C_{z,L}$ according to Theorem 12.9 in \cite{Fu72}. It follows
that $f\equiv -\infty$ on the $\mathcal{F}$-open set
$$V\cap B(z,\kappa)\subset\bigcup_{L\ni z} C_{z,L}.$$
Now if $|z|< \kappa$, then $0\in V \cap B(z,\kappa)$. The
conclusion is that the $\mathcal{F}$-boundary of $U$ does not hit
$\Omega $, and therefore $U=\Omega$.
\end{proof}
Since pluripolar sets are $\mathcal{F}$-closed, Theorem \ref{th1}
is a particular case of the following more general result.
\begin{corollary} Let $U$ be an $\mathcal{F}$-domain in $\CC^n$, and
let $E\subset\{f=-\infty\}$, where $f$ is
$\mathcal{F}$-plurisubharmonic on $U$ $(\not\equiv - \infty )$.
Suppose that $E$ is $\mathcal{F}$-closed. Then $U \backslash E$ is
an $\mathcal{F}$-domain.
\end{corollary}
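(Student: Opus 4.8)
The plan is to prove the two assertions separately. That $U\setminus E$ is $\mathcal{F}$-open is immediate, since $U$ is $\mathcal{F}$-open and $E$ is $\mathcal{F}$-closed; the substance is $\mathcal{F}$-connectedness. First I would record that $\{f=-\infty\}$ has empty $\mathcal{F}$-interior: if it contained a nonempty $\mathcal{F}$-open set, then $f\equiv-\infty$ on $U$ by Theorem~\ref{th6a}, contradicting $f\not\equiv-\infty$. Hence $E\subset\{f=-\infty\}$ also has empty $\mathcal{F}$-interior, and $U\setminus E$ is $\mathcal{F}$-dense in $U$.

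Then I would argue by contradiction: assume $U\setminus E=A\cup B$ with $A,B$ nonempty, disjoint and $\mathcal{F}$-open. By density of $U\setminus E$, every point of $U$ lies in $\overline A\cup\overline B$ (closures taken in $U$), so $U=(\overline A\cap U)\cup(\overline B\cap U)$ is a union of two nonempty $\mathcal{F}$-closed sets; since $U$ is $\mathcal{F}$-connected these must meet, say at $z_0$. A point of $\overline A\cap\overline B$ can lie neither in $A$ nor in $B$ (each being an $\mathcal{F}$-neighborhood of its own points and disjoint from the other), so $z_0\in E$. I would then apply Theorem~\ref{th2} at $z_0$ to obtain the $\mathcal{F}$-connected $\mathcal{F}$-neighborhood $N=\bigcup_{L\ni z_0}C_L\subset U$. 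Because $z_0\in\overline A\cap\overline B$, the $\mathcal{F}$-open set $N\setminus E$ meets both $A$ and $B$; so if $N\setminus E$ is $\mathcal{F}$-connected it must lie in a single one of $A,B$, which is the desired contradiction.

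Next I would reduce the $\mathcal{F}$-connectedness of $N\setminus E$ to a cleaner statement. Since $\{f=-\infty\}$ has empty $\mathcal{F}$-interior, $S:=N\setminus\{f=-\infty\}$ is $\mathcal{F}$-dense in $N$, hence in $N\setminus E$, while $S\subset N\setminus E$ because $E\subset\{f=-\infty\}$. As an $\mathcal{F}$-dense $\mathcal{F}$-connected subset of an $\mathcal{F}$-open set forces that set to be $\mathcal{F}$-connected (any separation of $N\setminus E$ would separate $S$), it suffices to prove that $S$ is $\mathcal{F}$-connected. The advantage is that on every complex line $L\ni z_0$ the slice $C_L\cap S=C_L\setminus\{f=-\infty\}$ is governed by the one-dimensional theory: if $f|_{C_L}\equiv-\infty$ the slice is empty and drops out, whereas if $f|_{C_L}\not\equiv-\infty$ then $f|_{C_L}$ is finely subharmonic, so $\{f|_{C_L}=-\infty\}$ is polar in $L$ and $C_L\setminus\{f=-\infty\}$ is finely connected by the one-dimensional case of Theorem~\ref{th1} (Fuglede's theorem, cf.~\cite{Fu72}).

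Finally I would assemble the slices. Each nonempty slice $C_L\setminus\{f=-\infty\}$ is finely connected and, by Proposition~\ref{pr2}, contains a circle about $z_0$ of radius exceeding a fixed $\kappa$, so it reaches into a common punctured neighborhood of $z_0$; two arbitrary points $p,q\in S$ would be joined by first moving each within its own slice and then passing between slices through this common region, bridging along the line through $p$ and $q$ (a good line, since $f\neq-\infty$ there). The main obstacle is exactly this transversal gluing: after deleting the common hub $z_0\in\{f=-\infty\}$ the one-dimensional spokes $C_L$ no longer share a point, so the $\mathcal{F}$-connectedness of $N$ furnished by Theorem~\ref{th2} does not by itself survive the removal, and the cross connections must be extracted from the genuinely two-dimensional structure, once more by invoking the one-dimensional Theorem~\ref{th1} along a second family of lines. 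Once $S$ is shown to be $\mathcal{F}$-connected, the reductions above close the argument.
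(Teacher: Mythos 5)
Your preparatory reductions are correct: $U\setminus E$ is $\mathcal{F}$-open, $\{f=-\infty\}$ has empty $\mathcal{F}$-interior by Theorem~\ref{th6a}, a separation $U\setminus E=A\cup B$ produces a point $z_0\in E\cap\overline{A}\cap\overline{B}$, and it would indeed suffice to prove that $N\setminus E$ (equivalently, its $\mathcal{F}$-dense subset $S=N\setminus\{f=-\infty\}$) is $\mathcal{F}$-connected, where $N=\bigcup_{L\ni z_0}C_L$. But that is exactly where the proof stops being a proof. The claim ``$S$ is $\mathcal{F}$-connected'' is itself an instance of the corollary you are proving (take the $\mathcal{F}$-domain $N$, the $\mathcal{F}$-plurisubharmonic function $f|_N\not\equiv-\infty$, and the $\mathcal{F}$-closed set $\{f=-\infty\}\cap N$), so the reduction does not decrease the difficulty, and your sketch for it does not close. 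The one-dimensional input is fine --- each nonempty slice $C_L\setminus\{f=-\infty\}$ is finely connected by Fuglede's theorem --- but the spokes $C_L$ pairwise shared only the hub $z_0$, which lies in $\{f=-\infty\}$ and is removed. Your proposed bridge, the line $M$ through $p$ and $q$, is not a spoke: it generally misses $z_0$, and neither Proposition~\ref{pr2} nor Theorem~\ref{th2} gives any control ensuring that $p$ and $q$ lie in the same fine component of $M\cap N$; that cross-line control is precisely what is missing, and you concede it yourself (``the main obstacle is exactly this transversal gluing \dots must be extracted''). A conceded key step is a genuine gap, not a technicality.

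The paper's proof avoids the connectedness-gluing problem altogether, and the mechanism is worth internalizing: the separation itself is used to manufacture a new $\mathcal{F}$-plurisubharmonic function. Writing $U\setminus E=V\cup W$, set $h=0$ on $V$, $h=-\infty$ on $W$, and define $\tilde f=f+h$ on $V\cup W$, $\tilde f=-\infty$ on $E$. Then $\tilde f$ is $\mathcal{F}$-upper semicontinuous (it is $\leq f$ everywhere and equals $f$ on the $\mathcal{F}$-open set $V$; it is $\equiv-\infty$ on the $\mathcal{F}$-open set $W$; at points of $E$ use $\tilde f\leq f$ and $f=-\infty$ there), and on each complex line it is finely subharmonic or $\equiv-\infty$ on each $\mathcal{F}$-component, in the spirit of the gluing Lemma~\ref{le5}. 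Since $\tilde f=-\infty$ on the nonempty $\mathcal{F}$-open set $W$, Theorem~\ref{th6a} forces $\tilde f\equiv-\infty$ on $U$; hence $f\equiv-\infty$ on $V$, and Theorem~\ref{th6a} applied to $f$ gives $f\equiv-\infty$ on $U$, contradicting the hypothesis. All the geometric difficulty you ran into is absorbed into Theorem~\ref{th6a}, which was already proved; no new connectedness argument is needed.
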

\begin{proof} Suppose that $U\backslash E = V\cup W$, where $V$ and
$W$ are non empty disjoint $\mathcal{F}$-open sets. Define $h$ :
$U \backslash E \rightarrow [-\infty, \infty[$ by

\begin{equation} h(z)=
    \begin{cases}
        0 & \text{if $z\in V$,}\cr
        -\infty&\text{if $z\in W$,}
    \end{cases}
\end{equation}
and define
\begin{equation} \tilde f(z)=
    \begin{cases}
        f +h & \text{if $z\in V\cup W$,}\cr
        -\infty&\text{if $z\in E$.}
    \end{cases}
\end{equation}
Then $\tilde f$ is $\mathcal{F}$-upper semi-continuous. If we
restrict $\tilde f$ to a complex line, it is finely subharmonic.
Indeed, on $V$ because $V\cap L$ is $\mathcal{F}$-open and $\tilde
f=f\not\equiv -\infty$, and in $U\backslash V $ there is nothing
to prove because there $\tilde f=-\infty$. By Theorem \ref{th6a},
$\tilde f\equiv-\infty$, a contradiction
\end{proof}
One more consequence of Theorem \ref{th6a} is the following
maximum principle for $\mathcal{F}$-plurisubharmonic functions.
\begin{theorem} Let $f \leq 0$ be $\mathcal{F}$-plurisubharmonic function on
an $\mathcal{F}$-domain $U$ in $\CC^n$. Then either $f < 0$ or
$f\equiv 0$.
\end{theorem}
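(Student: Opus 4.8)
The plan is to prove the dichotomy by showing that the $\mathcal{F}$-closed set $\{f=0\}$ is in fact also $\mathcal{F}$-open, and then invoking the $\mathcal{F}$-connectedness of $U$. First I would record the trivial reformulation: since $f\le 0$, the statement that $f$ is not everywhere strictly negative means exactly that $\{f=0\}\ne\emptyset$, so it suffices to show that $\{f=0\}\ne\emptyset$ forces $f\equiv 0$. The set $\{f=0\}=\{f\ge 0\}$ is $\mathcal{F}$-closed because $f$ is $\mathcal{F}$-upper semicontinuous, so the whole problem reduces to proving that $\{f=0\}$ is $\mathcal{F}$-open: a nonempty $\mathcal{F}$-clopen subset of the $\mathcal{F}$-connected set $U$ must equal $U$.

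The heart of the argument is a line-by-line application of the one-variable maximum principle, in the same spirit as the proof of Theorem \ref{th6a}. Fix $z_0$ with $f(z_0)=0$ and let $L$ be an arbitrary complex line through $z_0$, and write $C_L$ for the $\mathcal{F}$-component of $U\cap L$ containing $z_0$, which is finely open and finely connected in $L$. By Definition \ref{defn1}, the restriction $f|_L$ is either finely subharmonic or $\equiv-\infty$ on $C_L$; since $f(z_0)=0>-\infty$, the former holds, so $f|_{C_L}$ is finely subharmonic on the finely connected finely open set $C_L$, bounded above by $0$ and attaining the value $0$ at $z_0$. By the strong maximum principle for finely subharmonic functions (Fuglede, \cite{Fu72}), $f\equiv 0$ on all of $C_L$. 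As this holds for every line $L$ through $z_0$, Theorem \ref{th2} shows that $\bigcup_{L\ni z_0}C_L$ is an $\mathcal{F}$-neighborhood of $z_0$ on which $f\equiv 0$. Hence $z_0$ is an $\mathcal{F}$-interior point of $\{f=0\}$, and since $z_0$ was arbitrary, $\{f=0\}$ is $\mathcal{F}$-open, completing the proof as indicated above.

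I expect the only genuine obstacle to be this one-variable input. Note that one cannot simply feed the situation into Theorem \ref{th6a}: that theorem propagates the value $-\infty$, i.e.\ the \emph{infimum} of a finely subharmonic function, whereas here we must propagate the finite \emph{maximum} $0$, and no increasing convex change of variable $\phi$ (which is what would be needed to keep $\phi\circ f$ subharmonic) can send the value $0$ to $-\infty$. Thus the strong maximum principle on each line is an independent ingredient, playing here precisely the role that Theorem 12.9 of \cite{Fu72} played in the proof of Theorem \ref{th6a}; everything else is the structural machinery (Lemmas \ref{le0} and \ref{le2}, packaged into Theorem \ref{th2}) already developed above. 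A secondary point to verify is that $f|_{C_L}$ is genuinely \emph{finely} subharmonic rather than subharmonic in some weaker sense, but this is immediate from Definition \ref{defn1} together with $f(z_0)\ne-\infty$.
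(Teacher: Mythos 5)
Your proof is correct, but it takes a genuinely different route from the paper's. You show that $\{f=0\}$ is $\mathcal{F}$-clopen: it is $\mathcal{F}$-closed by $\mathcal{F}$-upper semicontinuity, and $\mathcal{F}$-open because the strong maximum principle for finely subharmonic functions forces $f\equiv 0$ on each fine component $C_L$ of $U\cap L$ through a point $z_0\in\{f=0\}$, after which Theorem \ref{th2} (applied with $\Omega=U$) makes $\bigcup_{L\ni z_0}C_L$ an $\mathcal{F}$-neighborhood of $z_0$ contained in $\{f=0\}$; $\mathcal{F}$-connectedness of $U$ then gives the dichotomy. The one-variable ingredient you import is genuine --- in Fuglede's theory a finely hyperharmonic function $\geq 0$ on a fine domain vanishing at a point vanishes identically --- and, in fact, it can be recovered on each line from the very Theorem 12.9 of \cite{Fu72} used in the proof of Theorem \ref{th6a}, by the multiplication trick below, so your argument is available with the paper's own toolkit. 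The paper's proof is much shorter and re-runs no structure theory: assuming $V=\{f<0\}\neq\emptyset$, the functions $g_n=nf$ are $\mathcal{F}$-plurisubharmonic and decrease (since $f\le 0$) to a function $g$ equal to $-\infty$ on the $\mathcal{F}$-open set $V$ and to $0$ on $\{f=0\}$; the cone of $\mathcal{F}$-plurisubharmonic functions is stable under decreasing limits, so Theorem \ref{th6a} applies to $g$ and gives $g\equiv-\infty$, i.e.\ $\{f=0\}=\emptyset$. This also shows that the obstruction you describe in your last paragraph is illusory: you claim one cannot feed the situation into Theorem \ref{th6a} because no increasing convex change of variable sends the maximum value $0$ to $-\infty$, but the reduction does not need to move $0$ at all --- it sends the strictly \emph{negative} values to $-\infty$ (via $g=\inf_n nf$, which stays $\mathcal{F}$-plurisubharmonic by decreasing-limit stability rather than by a composition theorem) and then propagates the $\mathcal{F}$-open set $\{f<0\}$, not the maximum set. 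In short, your route buys a self-contained geometric clopen argument at the price of an extra appeal to the fine strong maximum principle and a second pass through the structure machinery, whereas the paper's route buys brevity by reusing Theorem \ref{th6a} and the closure properties of the cone already recorded in Section \ref{final}.
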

\begin{proof} Suppose that the $\mathcal{F}$-open set $ V=\{z\in U \ :
f(z)<0\}$ is not empty. The function $g_n=nf$ is
$\mathcal{F}$-plurisubharmonic. Since $g_n$ decreases on $V$, the
limit function $g$ is $\mathcal{F}$-plurisubharmonic. By Theorem
\ref{th6a}, $g \equiv -\infty $ since it equals $-\infty$ in $V$.
Hence $f < 0$.
\end{proof}
\section{Application to pluripolar hulls}\label{final1}
In this final section we give a definition of
$\mathcal{F}$-holomorphic functions of several complex variables.
Next we prove a higher dimensional analog of Theorem 1.2 in
\cite{EEW}. See also Theorem 1 in \cite{EJ}.
\begin{definition}\label{fdef} Let $U\subseteq \CC^n$ be $\mathcal{F}$-open. A function $f$ :\
$U$ $\longrightarrow$ $\CC$ is said to be
$\mathcal{F}$-holomorphic if every point of $U$ has a compact
$\mathcal{F}$-neighborhood $K\subseteq U$ such that the
restriction $f|_K$ belongs to $H(K)$.
\end{definition}

Here $H(K)$ denotes the uniform closure on $K$ of the algebra of
holomorphic functions in a neighborhood of $K$.

\begin{lemma}\label{flem} Let $U\subseteq\CC^{n}$ be an $\mathcal{F}$-domain, and let $f$ :\
$U$ $\longrightarrow$ $\CC$ be an $\mathcal{F}$-holomorphic
function. Suppose that $h$ :\ $\CC^2$ $\longrightarrow$
$[-\infty,\ +\infty [$ is a plurisubharmonic function. Then the
function $z \mapsto h(z,f(z))$ is $\mathcal{F}$-plurisubharmonic
on $U$ .
\end{lemma}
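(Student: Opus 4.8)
The plan is to verify the two requirements of Definition \ref{defn1} for the function $g(z)=h(z,f(z))$: that $g$ is $\mathcal{F}$-upper semicontinuous, and that its restriction to every complex line $L$ is finely subharmonic, or $\equiv-\infty$, on every $\mathcal{F}$-component of $U\cap L$. Since both properties are $\mathcal{F}$-local and $\mathcal{F}$-plurisubharmonicity has the sheaf property, I would fix a point $p\in U$ together with a compact $\mathcal{F}$-neighborhood $K\subseteq U$ of $p$ with $f|_K\in H(K)$, and argue on $K$. For the upper semicontinuity: the identity map is $\mathcal{F}$-continuous, and $f|_K$, being a uniform limit on $K$ of functions holomorphic near $K$, is Euclidean, hence $\mathcal{F}$-continuous on $K$; thus $z\mapsto(z,f(z))$ is $\mathcal{F}$-continuous from $K$ into $\CC^{2}$ endowed with its Euclidean topology. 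Composing with the Euclidean upper semicontinuous function $h$ gives an $\mathcal{F}$-upper semicontinuous function, as required.

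The crux is fine subharmonicity along lines. Fix a complex line $L$ and an $\mathcal{F}$-component $\Omega_0$ of $U\cap L$. After an affine change of coordinates I identify $\Omega_0$ with an $\mathcal{F}$-domain $\omega\subseteq\CC$ with coordinate $\zeta$; the restriction $F(\zeta)=f|_L$ is finely holomorphic, since restricting the approximating holomorphic functions to $L$ shows $F|_{K\cap L}\in H(K\cap L)$, and $h$ restricted to the relevant two-dimensional affine slice becomes a function $H\in\psh(\CC^{2})$. It therefore suffices to show that $\zeta\mapsto H(\zeta,F(\zeta))$ is finely subharmonic on $\omega$ or $\equiv-\infty$. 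The main obstacle is that $H$ is only upper semicontinuous, so uniform convergence of the inner arguments does not by itself control the values of $H$; I would overcome this by a double approximation.

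First, on a ball $B'$ containing the bounded graph of $F$ over $K\cap L$, write $H$ as the decreasing limit of smooth functions $H_k\in\psh(B')$ obtained by mollification. Second, write $F=\lim_m F_m$ uniformly on $K\cap L$, with $F_m$ holomorphic in Euclidean neighborhoods of $K\cap L$. For each fixed $k$, the functions $\zeta\mapsto H_k(\zeta,F_m(\zeta))$ are ordinary subharmonic near $K\cap L$, being a smooth plurisubharmonic function composed with the holomorphic map $\zeta\mapsto(\zeta,F_m(\zeta))$, hence finely subharmonic on the $\mathcal{F}$-interior of $K\cap L$. Since $H_k$ is uniformly continuous on compacta and $F_m\to F$ uniformly on $K\cap L$, they converge there uniformly to $\zeta\mapsto H_k(\zeta,F(\zeta))$, which is consequently finely subharmonic by closure of the class under finely locally uniform convergence (cf.\ \cite{Fu72} and the properties recorded after Definition \ref{defn1}).

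Finally, letting $k\to\infty$, the functions $H_k(\zeta,F(\zeta))$ decrease pointwise to $H(\zeta,F(\zeta))$; by stability under pointwise infimum of lower directed families, that is, by Fuglede's convergence theorem for decreasing sequences of finely subharmonic functions (\cite{Fu72}), the limit is finely subharmonic or identically $-\infty$ on $\omega$. Patching over the $\mathcal{F}$-neighborhoods $K$ by the sheaf property yields fine subharmonicity of $z\mapsto h(z,f(z))$ along $L$ on all of $\Omega_0$, and since $L$ and $\Omega_0$ were arbitrary this completes the verification. I expect the only delicate points to be the interchange of the two limits, which is handled by performing the uniform limit in $m$ first for each fixed smooth $H_k$ and the decreasing limit in $k$ afterwards, and the observation that the $\mathcal{F}$-interior of $K\cap L$ is contained in every shrinking Euclidean neighborhood on which $H_k(\cdot,F_m(\cdot))$ is subharmonic, which holds simply because that $\mathcal{F}$-interior is a subset of $K\cap L$.
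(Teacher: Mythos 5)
Your proposal follows, in essence, the same double-approximation strategy as the paper's own proof: approximate $f$ uniformly on a compact $\mathcal{F}$-neighborhood $K$ by holomorphic functions, approximate $h$ from above by a decreasing sequence of continuous (smooth) plurisubharmonic functions, use closure of the relevant class under uniform limits and under decreasing limits, and finish with the sheaf property. The difference is organizational: the paper stays at the level of $\CC^n$, observing that for continuous $h$ the compositions $h(z,f_m(z))$ are plurisubharmonic near $K$ and converge uniformly on $K$, and then invokes the closure properties of the cone of $\mathcal{F}$-plurisubharmonic functions recorded after Definition \ref{defn1}; you instead unfold Definition \ref{defn1} and verify $\mathcal{F}$-upper semicontinuity and fine subharmonicity line by line, using the one-variable convergence theorems of Fuglede directly. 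Both routes are sound, and the order in which you interchange the two limits (uniform in $m$ first, for fixed smooth $H_k$; decreasing in $k$ afterwards) is exactly the order used in the paper.

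There is one degenerate case your argument as written does not cover. When you restrict $h$ to the affine slice $L\times\CC\subset\CC^{n+1}$, the restriction $H$ may be identically $-\infty$ (for $n\geq 2$, take $h(z,w)=\log|z_1-c|$ and a line $L\subset\{z_1=c\}$), and then $H$ cannot be written as a decreasing limit of mollifications, since these are all $\equiv-\infty$ as well. The case is harmless: the composition is then $\equiv-\infty$ on $U\cap L$, which is precisely one of the alternatives permitted by Definition \ref{defn1}, so a single sentence disposes of it; but the sentence is needed. Note that the paper's arrangement avoids this issue automatically, because it approximates $h$ on all of $\CC^{n+1}$ (where $h\not\equiv-\infty$ may be assumed, the lemma being trivial otherwise) before any restriction to lines is made. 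A second, purely technical point you should make explicit: for large $m$ the graph of $F_m$ over a small Euclidean neighborhood of $K\cap L$ stays inside the ball $B'$ on which $H_k$ is defined, which follows from the uniform convergence $F_m\to F$ on $K\cap L$ and is needed for $H_k(\zeta,F_m(\zeta))$ to make sense.
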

\begin{proof} First, we assume that $h$ is continuous and finite everywhere.
Let $a\in U$. By Definition \ref{fdef} there is a compact
$\mathcal{F}$-neighborhood $K$ of $a$ and a sequence $f_n$ of
holomorphic functions defined in usual neighborhoods of $K$ that
converges uniformly to $f|_K$. Since $h(z, f_n(z))$ is
plurisubharmonic and converges uniformly to $h(z, f(z))$ on $K$,
$h(z, f(z))$ is $\mathcal{F}$-plurisubharmonic in the
$\mathcal{F}$-interior
of $K$.\\
Suppose that $h$ is arbitrary. Then $h$ is the limit of some
decreasing sequence of continuous plurisubharmonic functions $h_n
\in \psh(\CC^2)$. By the first part of the proof, $(h_n(z,
f(z)))_n$ is a decreasing sequence of
$\mathcal{F}$-plurisubharmonic functions in the
$\mathcal{F}$-interior of $K$. The limit $h(z,f(z))$ is therefore
$\mathcal{F}$-plurisubharmonic in the $\mathcal{F}$-interior of
$K$. Now by the sheaf property of $\mathcal{F}$-plurisubharmonic
function we conclude that $h(z, f(z))$ is
$\mathcal{F}$-plurisubharmonic on $U$.
\end{proof}
A version of this lemma for functions of one variable with similar
proof appears in \cite{EEW}.

The pluripolar hull $E_{\Omega}^*$ of a pluripolar set $E$
relative to an open set $\Omega $ is defined as follows.
$$
E_{\Omega}^{*}=\bigcap \{z\in \Omega \ : u(z)= - \infty\},
$$
where the intersection is taken over all plurisubharmonic
functions defined in $\Omega$ which are equal to $-\infty$ on $E$.

\begin{theorem}\label{thm11} Let $U \subset \CC^{n}$ be an $\mathcal{F}$-domain.
Let $f$ be $\mathcal{F}$-holomorphic in $U$. Suppose that for some
$\mathcal{F}$-open subset $V\subset U $ the graph $ \Gamma_{f}(V)$
of $f$ over $V$ is pluripolar in $\CC^{n+1}$. Then the graph $
\Gamma_{f}(U)$ of $f$ is pluripolar in $\CC^{n+1}$. Moreover, $
\Gamma_{f}(U)\subset (\Gamma_{f}(V))^{\ast}_{\CC^{n+1}}$.
\end{theorem}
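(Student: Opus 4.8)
The plan is to reduce everything to the combination of Lemma \ref{flem} and Theorem \ref{th6a}, exactly in the spirit of the one-variable arguments in \cite{EEW,EJ}. The central fact I would establish first is the following claim: \emph{if $u$ is any plurisubharmonic function on $\CC^{n+1}$ with $u\equiv-\infty$ on $\Gamma_{f}(V)$, then $u\equiv-\infty$ on all of $\Gamma_{f}(U)$.} To prove the claim I would set $g(z)=u(z,f(z))$ for $z\in U$. Since $f$ is $\mathcal{F}$-holomorphic on the $\mathcal{F}$-domain $U$ and $u$ is plurisubharmonic, Lemma \ref{flem} (applied with $h=u$, in its evident $\CC^{n+1}$ formulation) shows that $g$ is $\mathcal{F}$-plurisubharmonic on $U$. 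By hypothesis $u(z,f(z))=-\infty$ for every $z\in V$, so $g\equiv-\infty$ on the $\mathcal{F}$-open set $V$. Theorem \ref{th6a} then forces $g\equiv-\infty$ on the whole $\mathcal{F}$-domain $U$, which is precisely the assertion that $u\equiv-\infty$ on $\Gamma_{f}(U)$.

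Granting the claim, both conclusions follow quickly. For the inclusion $\Gamma_{f}(U)\subset(\Gamma_{f}(V))^{*}_{\CC^{n+1}}$, recall that the pluripolar hull is the intersection of the sets $\{u=-\infty\}$ taken over all plurisubharmonic $u$ on $\CC^{n+1}$ that are $-\infty$ on $\Gamma_{f}(V)$. The claim says every one of these sets already contains $\Gamma_{f}(U)$, so their intersection does too. For the pluripolarity of $\Gamma_{f}(U)$, I would invoke Josefson's theorem: since $\Gamma_{f}(V)$ is pluripolar in $\CC^{n+1}$ there is a single plurisubharmonic $u$ on $\CC^{n+1}$ with $u\not\equiv-\infty$ and $u\equiv-\infty$ on $\Gamma_{f}(V)$. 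The claim gives $\Gamma_{f}(U)\subset\{u=-\infty\}$, and a subset of the $-\infty$ set of a nonconstant global plurisubharmonic function is pluripolar.

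The only genuinely delicate point is the application of Lemma \ref{flem}: it is stated for plurisubharmonic $h$ on $\CC^{2}$, i.e.\ the case $n=1$, whereas here I need the composition $u(z,f(z))$ with $z\in\CC^{n}$ and $u$ plurisubharmonic on $\CC^{n+1}$. I expect this to be the main thing to verify, but it should present no real obstacle: the proof of Lemma \ref{flem} uses only local uniform approximation of $f$ on compact $\mathcal{F}$-neighborhoods by functions holomorphic in ordinary neighborhoods, the stability of plurisubharmonicity of $u(z,f_n(z))$ under such uniform convergence, the representation of an arbitrary $u$ as a decreasing limit of continuous plurisubharmonic functions, and the sheaf property of $\mathcal{F}$-plurisubharmonic functions, none of which is sensitive to the dimension of the base. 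Thus the argument carries over verbatim to $u\in\psh(\CC^{n+1})$, and the theorem follows.
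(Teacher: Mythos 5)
Your proposal is correct and takes essentially the same route as the paper: Josefson's theorem produces a global $h\in\psh(\CC^{n+1})$, $\not\equiv-\infty$, that is $-\infty$ on $\Gamma_{f}(V)$, and then Lemma \ref{flem} combined with Theorem \ref{th6a} propagates $-\infty$ along all of $\Gamma_{f}(U)$. Your two refinements---running the argument for \emph{every} plurisubharmonic competitor $u$ (needed for the hull inclusion, which the paper compresses into ``it follows at once''), and noting that the $\CC^{2}$ in the statement of Lemma \ref{flem} must be read as $\CC^{n+1}$ (a typo, as the paper's own application of the lemma confirms)---are exactly the details the paper leaves implicit.
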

\begin{proof} By Josefson's theorem, cf. \cite{Jo78}, there exists
$h\in \psh(\CC^{n+1})$ ($\not\equiv -\infty$) such that $h(z,
f(z))= -\infty$, $\forall z\in V$. In view of Lemma \ref{flem} and
Theorem \ref{th6a}, the function $h(z, f(z))$ is identically
$-\infty$ in $U$. It follows at once that $ \Gamma_{f}(U)$ is
pluripolar and $ \Gamma_{f}(U)\subset
(\Gamma_{f}(V))^{\ast}_{\CC^{n+1}}$.
\end{proof}

As a corollary we obtain a generalization to several complex
variables of the main result of \cite{EJ}. We keep the notation of
Theorem \ref{thm11}.
\begin{corollary} \label{cor1} Suppose that $U$ contains a ball $B$.
Then $ \Gamma_{f}(U)$ is pluripolar and $ \Gamma_{f}(U)\subset
(\Gamma_{f}(B)))^{\ast}_{\CC^{n+1}}$.
\end{corollary}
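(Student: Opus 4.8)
The plan is to invoke Theorem~\ref{thm11} with the r\^ole of the $\mathcal{F}$-open set $V$ played by the Euclidean ball $B$ itself. Since every Euclidean open set is $\mathcal{F}$-open, $B$ is an admissible choice for $V$, and the only hypothesis of Theorem~\ref{thm11} that needs checking is that $\Gamma_f(B)$ is pluripolar in $\CC^{n+1}$. Granting this, Theorem~\ref{thm11} gives at once that $\Gamma_f(U)$ is pluripolar and that $\Gamma_f(U)\subset(\Gamma_f(B))^{\ast}_{\CC^{n+1}}$, which is exactly the assertion.

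To prove that $\Gamma_f(B)$ is pluripolar I would first show that the restriction $f|_B$ is holomorphic in the ordinary sense, and for this I slice by complex lines. Fix a point $a\in B$ and a complex line $L$ through $a$ parallel to one of the coordinate axes. By Definition~\ref{fdef} there is a compact $\mathcal{F}$-neighborhood $K\subseteq U$ of $a$ with $f|_K\in H(K)$; restricting to $L$ the holomorphic functions that approximate $f$ uniformly on $K$ shows that $f|_{K\cap L}\in H(K\cap L)$, and $K\cap L$ is a compact fine neighborhood of $a$ in $L\cong\CC$. Hence $f$ is finely holomorphic along the Euclidean-open slice $B\cap L$. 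By the classical one-variable result of Fuglede, a finely holomorphic function on a finely open set is holomorphic at every point lying in the Euclidean interior of its domain; therefore $f|_{B\cap L}$ is genuinely holomorphic. Letting $L$ run through the complex lines parallel to the coordinate axes shows that $f$ is holomorphic in each variable separately, so by Hartogs' theorem on separate analyticity $f\in\OO(B)$. Consequently $\Gamma_f(B)$ is the zero set of the holomorphic function $(z,w)\mapsto w-f(z)$ on $B\times\CC$, a complex hypersurface, and is therefore pluripolar.

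The routine ingredients here, namely that restriction to a line preserves membership in $H(K)$ and that the graph of a holomorphic function is pluripolar, are immediate. The real content, and the main obstacle, is the passage from $\mathcal{F}$-holomorphy to ordinary holomorphy on the Euclidean ball $B$; this is precisely where the one-variable regularity theorem for finely holomorphic functions and Hartogs' theorem enter. With $\Gamma_f(B)$ shown to be pluripolar, the corollary is a direct application of Theorem~\ref{thm11}.
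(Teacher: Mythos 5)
Your proposal is correct and follows essentially the same route as the paper's own proof: slice $B$ by complex lines, use Fuglede's one-variable regularity theorem (\cite{Fu81}, page 63) to upgrade $\mathcal{F}$-holomorphy to genuine holomorphy of $f$ on $B$, conclude that $\Gamma_f(B)$ is pluripolar, and then apply Theorem~\ref{thm11} with $V=B$. You merely make explicit some steps the paper leaves implicit, namely that $K\cap L$ serves as a compact fine neighborhood in the slice and that Hartogs' theorem on separate analyticity yields $f\in\OO(B)$.
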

\begin{proof} On the intersection of $B$ with any complex line $f$ is a $\mathcal{F}$-holomorphic
function of one variable, hence, holomorphic there, cf.
\cite{Fu81}, page 63. Thus $f$ is holomorphic on $B$ and
$\Gamma_{f}(B)$ is pluripolar. Now Theorem \ref{thm11} applies.
\end{proof}

\begin{remark}
Theorem \ref{thm11} and Corollary \ref{cor1} only explain for a
small part the propagation of pluripolar hulls. E.g., in the case
of Corollary \ref{cor1} take $B$ the unit ball and consider the
function $g(z)=f(z)(z_1-z_2^2)$. Then, whatever the extendibility
properties of $f$ may be, the pluripolar hull of graph of $g$ will
contain the set $\{z_1=z_2^2\}$.
\end{remark}

\noindent Kd\textsc{V Institute for Mathematics \\
University of Amsterdam \\
Plantage Muidergracht 24 \\
1018 TV Amsterdam \\
The Netherlands}\\
janwieg@science.uva.nl \\  smarzgui@science.uva.nl

\end{document}